\newtheorem{theorem}{Theorem}
\newtheorem{lemma}{Lemma}
\newtheorem{prop}{Proposition}
\newtheorem{corollary}{Corollary}
\numberwithin{subcase}{case}
\theoremstyle{definition}
\theoremstyle{remark}
\newtheorem*{remark*}{Remark}
\newcommand\C{\mathbb{C}}
\newcommand\CP{\mathbb{CP}}
\newcommand\Z{\mathbb{Z}}
\newcommand\IMG{\mathrm{IMG}}
\newcommand\inv{^{-1}}
\renewcommand\Re{\mathop{\mathrm{Re}}}
\newcommand\id{\operatorname{id}}
\newcommand{\mx}[1]{\mathbf{#1}}
\newcommand{\bdy}{\partial}
\newcommand{\conj}[1]{\overline{#1}}
\newcommand{\EP}{\mathbb{E}^2}
\newcommand{\EPclos}{\overline{\mathbb{E}}{}^2}
\newcommand{\UD}{\mathbb{D}}
\begin{document}
\title{Geometry and Algebra of the Deltoid Map}
\markright{The Deltoid Map}
\author{Joshua P. Bowman}
\date{\today}

\maketitle

\begin{abstract}
The geometry of the deltoid curve gives rise to a 
self-map of $\C^2$ that is expressed in coordinates by 
$f(x,y) = (y^2 - 2x, x^2 - 2y)$. This is one in a family 
of maps that generalize Chebyshev polynomials to several 
variables. We use this example to illustrate two important 
objects in complex dynamics: the Julia set and the iterated 
monodromy group.
\end{abstract}

\section{Introduction.}

Complex dynamics is perhaps best known for the fractal 
images it produces. For instance, given a polynomial 
function $\C \to \C$, an important set to consider is 
the \emph{Julia set}, whose points behave ``chaotically'' 
under iteration of the function; for most polynomials, 
the Julia set is a fractal. However, the Julia set is 
a smooth curve in the case of two special families: 
\emph{power maps}, having the form $z \mapsto z^d$, 
and \emph{Chebyshev polynomials}, of which the simplest 
example is $z \mapsto z^2 - 2$. For power maps, the 
Julia set is the unit circle, and for Chebyshev polynomials 
it is the segment $[-2,2]$, contained in the real line. 
These structurally simple examples play a distinguished 
role in complex dynamics, and studying them can illuminate 
parts of the theory that apply in more complicated cases.

Power maps have an obvious generalization to functions 
from $\C^n$ to itself: just take the $d$th power of each 
coordinate. The higher-dimensional analogues of Chebyshev 
polynomials are not as obvious, however. In the 1980s, 
Veselov \cite{aV86,aV91} and Hoffman--Withers \cite{mHwW88} 
independently constructed a family of ``Chebyshev-like'' 
self-maps of $\C^n$ associated to each crystallographic 
root system of rank $n$. The cases where $n = 2$ have 
received much further attention (see, e.g., 
\cite{aL90,sN08,bRhM11,kU01,kU07,kU09,wW88}), 
especially for the $A_2$ root system, which is connected 
with the deltoid curve (a.k.a.\ three-cusped hypocycloid 
or Steiner's hypocycloid).

This article presents a new approach to construct a 
quadratic $A_2$-type map $f$ based directly on 
geometric properties of the deltoid. For this reason 
we call $f$ \emph{the deltoid map}. The set of lines 
tangent to the deltoid will play a crucial role, and 
indeed we will see that $f$ preserves this set of lines. 
(This fact was previously observed in \cite{wW88}; the 
difference is that we construct the map from the tangent 
lines, rather than starting with the map ahead of time 
and deducing from it the invariance of the tangent lines; 
in particular, our approach does not use the theory of root 
systems.) Using this invariance property, we will study 
two dynamical features of $f$: one geometric (the Julia set) 
and the other algebraic (the iterated monodromy group). 
Both of these objects will be formally defined later in the 
article.


The Julia set of $f$ is a real algebraic hypersurface $J$ 
of degree $4$ (Corollary~\ref{C:julia}). We derive this 
property from a description of $J$ in terms of pedal 
curves, which arise from classical differential geometry 
(Theorem~\ref{T:pedal}). The Julia set of $f$ is therefore 
considerably more interesting geometrically than in 
the case of a Chebyshev polynomial in one variable, 
the segment $[-2,2]$ mentioned above.

The iterated monodromy group of $f$ is an affine Coxeter 
group (Theorem~\ref{T:img}). Such groups are present 
implicitly in the construction from \cite{aV86,aV91} and 
explicitly in \cite{mHwW88}. The connection with iterated 
monodromy groups is new, however, and extends the 
(very short) list of polynomial endomorphisms of $\C^n$, 
with $n \ge 2$, whose iterated monodromy groups are known 
(see \cite{jBsK10,vN12} for the only other examples known 
to the author).

In future work, we will show how these properties of the 
deltoid map generalize to other Chebyshev-like maps.

\section{Lines and planes.}

In this section we establish some notation and terminology.

The \emph{complex projective line} $\CP^1$ is identified 
with the one-point compactification of $\C$ 
(i.e., the Riemann sphere) in the usual way; generally 
$t \in \C \cup \{\infty\}$ will be used to mean 
this extended complex coordinate on $\CP^1$. 
The \emph{complex projective plane} $\CP^2$ has 
homogeneous coordinates $[x:y:z]$, where $x$, $y$, and $z$ 
are complex numbers, not all zero; this means that  
$[x:y:z] = [\alpha x:\alpha y:\alpha z]$ for all 
$\alpha \in\C\setminus\{0\}$. We use $[a:b:c]^\vee$ 
to represent homogeneous coordinates on the 
\emph{dual projective plane} $(\CP^2)^\vee$, whose 
elements are the lines in $\CP^2$, so that 
\[
[x:y:z] \; \in \; [a:b:c]^\vee 
\qquad\iff\qquad
ax + by + cz = 0.
\]
The \emph{affine plane} $\C^2$ is canonically included in 
$\CP^2$ via the map $(x,y) \mapsto [x:y:1]$. The complement 
of the image of $\C^2$ under this embedding is the 
{\em complex line at infinity} $L_\infty \cong \CP^1$, 
having equation $z = 0$; that is, $L_\infty = [0:0:1]^\vee$.

The real plane in $\C^2$ with equation $y = \bar{x}$ is 
a copy of the Euclidean plane, 
and it will be denoted by $\EP$. Its closure in $\CP^2$ 
is a copy of the real projective plane, 
$\EPclos \cong \mathbb{RP}^2$, but we do not write it 
as such, because the coordinates induced on $\EP$ as 
a subset of $\C^2$ are not real. We call $\bdy\EP = 
\EPclos \setminus \EP = \EPclos \cap L_\infty \cong S^1$ the 
\emph{circle at infinity}, trusting no confusion will arise 
from the fact that the (real) circle at infinity is contained 
in the (complex) line at infinity.

As a real submanifold of $\CP^2$, $\EP$ does not carry a 
complex structure (else its closure could not be the real 
projective plane, topologically), but the restriction of 
the coordinate $x$ to $\EP$ provides a bijection 
$\EP \cong \C$. This is what we will always mean when we 
carry out constructions on $\EP$ using a complex coordinate.

\section{The deltoid as a real curve and as a complex curve.}
\label{S:curve}

In this section we collect some known 
properties of the deltoid---especially regarding 
its tangent lines---that will be useful in our study.

\begin{figure}[h]\centering
\includegraphics{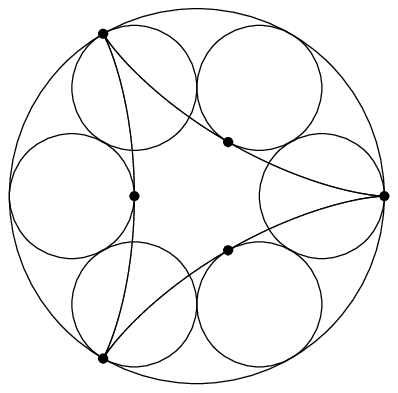}\hspace{0.9in}
\includegraphics{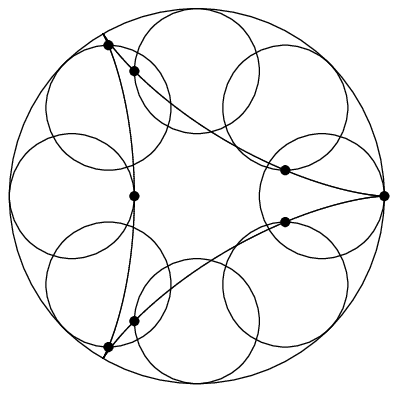}
\caption{Tracing out the deltoid as a hypocycloid.}
\label{F:deltoidtrace}
\end{figure}

The classical \emph{deltoid} 
is the curve traced by a point marked on the 
circumference of a circle of radius $1$ rolling 
without slipping inside a circle of radius $3$. 
When the center of the smaller circle travels once 
counterclockwise around the center of the larger circle, 
a point on the smaller circle's circumference makes two 
clockwise revolutions around its center. 
(See Figure~\ref{F:deltoidtrace}.) 
Because the centers remain $2$ units apart, the deltoid 
can be parametrized in $\EP$ by 
\[
x = 2t + {\bar{t}}^{2}, \qquad |t| = 1.
\]
This extends to a complex algebraic curve in the following 
way. Because $\EP$ is embedded in $\C^2$ as the real plane 
$y = \bar{x}$, the parametrization of the deltoid in $\C^2$ 
becomes $(2t+{\bar{t}}^2,2\bar{t}+t^2)$ with $|t|=1$. In 
order to make this parametrization holomorphic, we replace 
$\bar{t}$ with $t^{-1}$ (when $|t| = 1$, these are the same), 
and we define 
\begin{equation}\label{E:delparam}
\gamma(t) = \left(2t + \frac{1}{t^2}, \frac{2}{t} + t^2\right), 
\qquad t \in \C\setminus\{0\}.
\end{equation}
We can further extend $\gamma$ to a curve in $\CP^2$, 
which we also call $\gamma$, by appending an additional 
coordinate, initially equal to $1$, then clearing 
denominators (which is allowed in homogeneous coordinates):
\[
\gamma(t) = \big[ 2t^3 + 1 : 2t + t^4 : t^2 \big], 
\qquad t \in \CP^1
\]
Note that $\gamma(0) = [1:0:0]$ and $\gamma(\infty) = [0:1:0]$. 
(To see why the latter expression is correct, rewrite $\gamma(t)$ 
as $\gamma(1/s)$, clear denominators, then let $s$ go to $0$.) 
These are the only two points of $\CP^1$ that $\gamma$ sends to 
$L_\infty$.

$\mathcal{D}$ will denote the image of $\gamma$ in either 
$\C^2$ or $\CP^2$, and $\mathcal{D}_{\EP} = \mathcal{D} \cap \EP$ 
is the \emph{real} deltoid.

In $\C^2$ we have 
\[
\gamma'(t) 
= \left(2 - \frac{2}{t^3}, -\frac{2}{t^2} + 2t\right) 
= 2 \left(1 - \frac{1}{t^3}\right) (1,t)
\]
and so $\gamma'(t)$ vanishes precisely when $t$ equals $1$, 
$\omega = e^{i\,2\pi/3}$, or $\omega^2 = e^{i\,4\pi/3}$; 
these cube roots of unity give rise to the three cusps 
of $\mathcal{D}$. At every other point of $\mathcal{D}$, a 
tangent vector is $(1,t)$. An equation for the line tangent 
to $\mathcal{D}$ at $\gamma(t)$ is therefore 
\[
\begin{vmatrix}
1 & x - 2t - t^{-2} \\
t & y - 2t^{-1} - t^2 
\end{vmatrix} = 0,
\]
which is equivalent to 
\begin{equation}\label{E:taneqn}
t^3 - t^2 x + t y - 1 = 0.
\end{equation}
This equation works equally well at the cusps, where 
$t^3 = 1$ and \eqref{E:taneqn} reduces to $y = tx$, 
so each cusp also has a well-defined tangent line, 
which passes through the origin. 

It is worth remarking that in \cite{fMfvM54} 
the study of the real deltoid begins, not with 
any classical construction, but with equation 
\eqref{E:taneqn}, restricted to $y = \bar{x}$ 
and $|t| = 1$, which is simply called the 
``line equation'' of the deltoid.

Equation \eqref{E:taneqn} shows that a generic point 
$(x,y)$ of $\C^2$ lies on three tangent lines of 
$\mathcal{D}$. A point belongs to $\mathcal{D}$ 
if and only if at least two of these tangent lines 
coincide, which is to say that the discriminant of 
the left side of \eqref{E:taneqn} (as a polynomial in $t$) 
is zero. Thus we obtain an affine equation for 
$\mathcal{D}$ (and an additional reason to name this set 
$\mathcal{D}$, since it is where a discriminant vanishes): 
\begin{equation}\label{E:affeqn}
x^2 y^2 - 4\big(x^3 + y^3\big) + 18 xy - 27 = 0.
\end{equation}

Now we can also parametrize the dual curve 
$\mathcal{D}^\vee$ in the dual projective plane 
$(\CP^2)^\vee$. From \eqref{E:taneqn}, we get the 
following parametrization of $\mathcal{D}^\vee$:
\begin{equation}\label{E:dualparam}
\check\gamma(t) = [-t^2:t:t^3-1]^\vee.
\end{equation}
In particular we see that 
$\check\gamma(0) = \check\gamma(\infty) = [0:0:1]^\vee$, 
so that the line at infinity in $\CP^2$ is tangent to 
$\mathcal{D}$ at both $\gamma(0)$ and $\gamma(\infty)$. 
(This tangency can also be seen, less directly, from the 
fact that $L_\infty$ intersects $\mathcal{D}$, a curve of 
degree 4, in only two points.)
From \eqref{E:dualparam}, we can deduce that an 
equation for $\mathcal{D}^\vee$ is 
\begin{equation}\label{E:dualeqn}
a^3 + b^3 = abc
\end{equation}
(when $a$, $b$, and $c$ are real, this equation produces 
the folium of Descartes). This curve is smooth except 
for a self-intersection at $[0:0:1]^\vee$, which shows 
that the line at infinity is the only bitangent of 
$\mathcal{D}$.

Because equation \eqref{E:affeqn} has degree four, a 
generic line in $\CP^2$ will intersect $\mathcal{D}$ in 
four points. Meanwhile, a generic element of $\mathcal{D}^\vee$ 
(that is, a line tangent to $\mathcal{D}$) will intersect 
$\mathcal{D}$ at two points besides the point of tangency. 
These other two points of intersection are connected with 
several interesting geometric properties; we state three 
of them here for later use. All three have easy algebraic 
proofs, which we leave to the reader. They are illustrated 
in Figure~\ref{F:3properties}.

\begin{figure}[h]\centering
\includegraphics{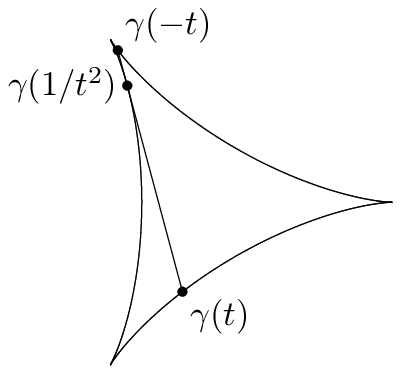}
\hspace{0.35in}
\includegraphics{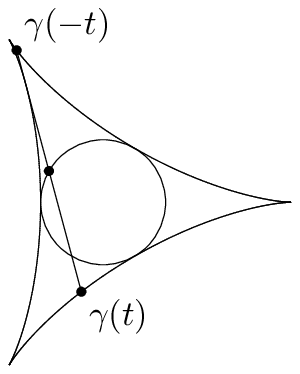}
\hspace{0.35in}
\includegraphics{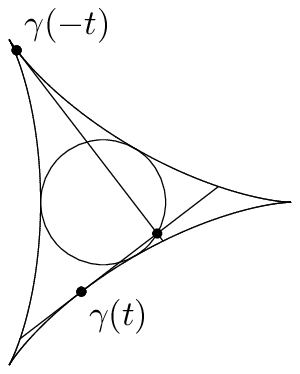}

\caption{Three properties of lines tangent to $\mathcal{D}$.}
\label{F:3properties}
\end{figure}

\begin{enumerate}
\item[(A)] For all $t \in \C\setminus\{0\}$, the line containing 
$\gamma(t)$ and $\gamma(-t)$ is tangent to $\mathcal{D}$ at 
$\gamma(1/t^2)$. 
\item[(B)] The midpoint of $\gamma(t)$ and $\gamma(-t)$ 
in $\C^2$ lies on the curve $xy = 1$. 
\item[(C)] The tangent lines $\check\gamma(t)$ and 
$\check\gamma(-t)$ intersect at a point also on $xy = 1$.
\end{enumerate}

Property (A) will later form the basis for our 
geometrically-defined dynamical system. Properties 
(B) and (C) will relate to the critical points of the map.

The curve $\mathcal{C}$ with equation $xy = 1$ is, 
projectively speaking, a conic section. Its 
intersection $\mathcal{C}_{\EP}$ with the plane $\EP$ 
is the unit circle, having equation $|x|^2 = 1$. 

The real deltoid $\mathcal{D}_{\EP}$ is a Jordan curve 
in $\EP$; let $K$ be the union of $\mathcal{D}_{\EP}$ 
with its interior. $K$ consists of those 
points $x$ such that all solutions to 
$t^3 - xt^2 + \bar{x}t - 1 = 0$ lie on 
the unit circle $|t| = 1$; in other words, these are 
the points that lie on three ``real'' tangent lines. 
(See Figure~\ref{F:K}, left and middle.) 

\begin{figure}[h]\centering
\includegraphics{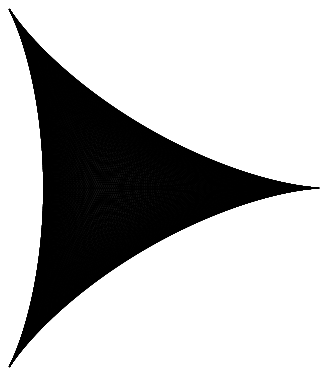}\hspace{0.45in}
\includegraphics{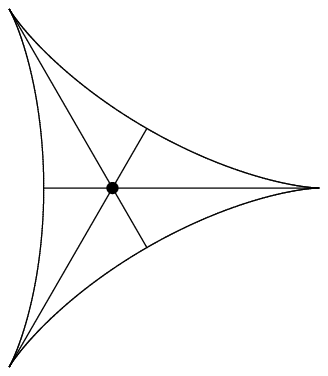}\hspace{0.45in}
\includegraphics{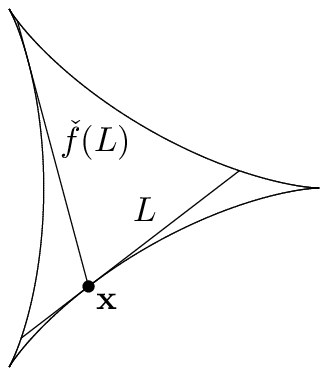}
\caption{{\sc Left:} The set $K \subset \EP$ bounded by 
$\mathcal{D}\cap{\EP}$. {\sc Middle:} Tangent lines through 
the three cusps of $\mathcal{D}$ and their point of 
intersection at the origin.
{\sc Right:} For a generic tangent line $L \in \mathcal{D}^\vee$ 
there is another line $\check{f}(L) \in \mathcal{D}^\vee$ such 
that $\check{f}(L)$ is secant to $\mathcal{D}$ at the point 
$\mathbf{x}$ where $L$ is tangent.}\label{F:K}
\end{figure}

\section{The deltoid map.}\label{S:map}

In this section we use the geometric properties of 
$\mathcal{D}$ to define a map $f$ from $\CP^2$ to itself. 
First, we define a natural map 
$\check{f}$ on the dual curve $\mathcal{D}^\vee$. 
Given $L = T_\mx{x}\mathcal{D} \in \mathcal{D}^\vee$, 
let $\check{f}(L)$ be the unique element of $\mathcal{D}^\vee$ 
such that $\{L,\check{f}(L)\}$ is the full set of tangent lines 
to $\mathcal{D}$ passing through $\mx{x}$, as illustrated 
in Figure~\ref{F:K}, right. 
(Note that $\check{f}(L)$ is the same as $L$ when $\mx{x} = \gamma(t)$ for 
$t \in \{1,\omega,\omega^2,0,\infty\}$, and it is distinct 
otherwise.) 
It follows from property (A) in the previous section that 
\[
\check{f}(\check\gamma(t)) = \check\gamma(1/t^2) 
\qquad\text{for all $t \in \CP^1$.}
\]
In particular, $\check{f}$ fixes $L_\infty$ as an element 
of $\mathcal{D}^\vee$, but it is helpful to think of it as 
exchanging the points of tangency, namely $\gamma(0)$ and 
$\gamma(\infty)$.

Now we turn to our promised self-map of $\C^2$. First 
we observe that, given $(x,y) \in \C^2$, the solutions 
$t_1, t_2, t_3$ to \eqref{E:taneqn} satisfy $t_1 t_2 t_3 = 1$ 
and 
\begin{equation}\label{E:xyparam}
x = t_1 + t_2 + t_3, \qquad
y = \frac{1}{t_1} + \frac{1}{t_2} + \frac{1}{t_3}.
\end{equation}
Conversely, if $t_1, t_2, t_3$ are chosen to satisfy 
$t_1 t_2 t_3 = 1$, then the formulas \eqref{E:xyparam} 
provide coefficients for the equation \eqref{E:taneqn} 
to be solved by $t_1, t_2, t_3$.

\begin{prop}
Suppose $\check\gamma(t_1)$, $\check\gamma(t_2)$, and 
$\check\gamma(t_3)$ are concurrent. Then so are 
$\check{f}(\check\gamma(t_1))$, $\check{f}(\check\gamma(t_2))$, 
and $\check{f}(\check\gamma(t_3))$.
\end{prop}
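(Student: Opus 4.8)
The plan is to convert the geometric hypothesis of concurrency into the single algebraic relation $t_1 t_2 t_3 = 1$, after which the conclusion is immediate: by property (A) the map $\check{f}$ sends $\check\gamma(t)$ to $\check\gamma(1/t^2)$, and $\tfrac{1}{t_1^2}\cdot\tfrac{1}{t_2^2}\cdot\tfrac{1}{t_3^2} = (t_1 t_2 t_3)^{-2}$, which is again $1$.

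So the first step is to record the equivalence: for distinct $t_1, t_2, t_3 \in \C\setminus\{0\}$, the lines $\check\gamma(t_1), \check\gamma(t_2), \check\gamma(t_3)$ are concurrent in $\CP^2$ if and only if $t_1 t_2 t_3 = 1$. One direction is already available: by \eqref{E:xyparam} and the remark following it, when $t_1 t_2 t_3 = 1$ the point with $x = t_1 + t_2 + t_3$ and $y = t_1^{-1} + t_2^{-1} + t_3^{-1}$ lies on all three tangent lines. For the converse one can run the same Vieta relations in reverse, or — more cleanly — compute the $3\times 3$ determinant whose rows are the coefficient vectors $[-t_i^2 : t_i : t_i^3 - 1]$ from \eqref{E:dualparam}: splitting the last column and extracting Vandermonde factors shows it equals $\bigl(\prod_{i<j}(t_j - t_i)\bigr)\,(t_1 t_2 t_3 - 1)$. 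Hence, for distinct $t_i$, the lines are concurrent exactly when $t_1 t_2 t_3 = 1$. The determinant form has the advantage of detecting concurrency at infinity as well, with no extra case analysis.

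With the equivalence in hand the proof closes at once. Property (A) gives $\check{f}(\check\gamma(t_i)) = \check\gamma(1/t_i^2)$ for each $i$; the hypothesis $t_1 t_2 t_3 = 1$ yields $(1/t_1^2)(1/t_2^2)(1/t_3^2) = (t_1 t_2 t_3)^{-2} = 1$; and applying the equivalence to the parameters $1/t_1^2, 1/t_2^2, 1/t_3^2$ shows $\check{f}(\check\gamma(t_1)), \check{f}(\check\gamma(t_2)), \check{f}(\check\gamma(t_3))$ are concurrent.

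The only points requiring attention are degenerate configurations, and none is a real obstacle. If two parameters coincide, say $t_1 = t_2$, then $\check\gamma(t_1) = \check\gamma(t_2)$ and likewise $\check\gamma(1/t_1^2) = \check\gamma(1/t_2^2)$, and three lines two of which coincide are trivially concurrent in $\CP^2$, so the statement holds before and after applying $\check{f}$. If some $t_i \in \{0,\infty\}$ then $\check\gamma(t_i) = L_\infty$, which $\check{f}$ fixes, and a short check shows three \emph{distinct} tangent lines of $\mathcal{D}$ cannot meet at a point of $L_\infty$, so this situation does not occur among distinct parameters. Thus the entire content of the argument is the determinant factorization (equivalently, the Vieta relations \eqref{E:xyparam}), and there is no genuinely hard step.
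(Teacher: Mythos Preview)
Your argument is correct and follows essentially the same route as the paper: reduce concurrency (off $L_\infty$) to the relation $t_1 t_2 t_3 = 1$, then observe that this relation is preserved under $t_i \mapsto 1/t_i^2$. Your determinant factorization supplies a clean justification of the equivalence that the paper merely asserts from the Vieta relations, and your handling of the degenerate cases (coincident parameters, $t_i \in \{0,\infty\}$) is more explicit than the paper's one-line dismissal of the $L_\infty$ case, but the substance is the same.
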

\begin{proof}
If the point of concurrency lies on $L_\infty$, 
then the result is trivial, as at most two lines 
are involved. Otherwise a necessary and sufficient 
condition for concurrency is $t_1 t_2 t_3 = 1$. 
But if $t_1$, $t_2$, and $t_3$ satisfy this equality, 
then also $(1/{t_1}^2) (1/{t_2}^2) (1/{t_3}^2) = 
(1/t_1 t_2 t_3)^2 = 1$.
\end{proof}

This proposition provides the basis for defining a map on 
all of $\CP^2$: given $\mx{x} \in \CP^2$, let $L_1$, $L_2$, 
and $L_3$ be the three elements of $\mathcal{D}^\vee$ 
passing through $\mx{x}$ (some of these may coincide). Then 
define $f(\mx{x})$ to be the point at which $\check{f}(L_1)$, 
$\check{f}(L_2)$, and $\check{f}(L_3)$ are concurrent. 
(See Figure~\ref{F:geomdef}.) To handle the special cases 
of when all three lines $L_1$, $L_2$, and $L_3$ coincide, 
we extend by continuity and define $f([1:0:0]) = [0:1:0]$, 
$f([0:1:0]) = [1:0:0])$, and whenever 
$\check{f}(L_1) = \check{f}(L_2) = \check{f}(L_3)$ passes 
through a cusp of $\mathcal{D}$, $f(\mx{x})$ is defined 
to be that cusp.

\begin{figure}\centering
\includegraphics{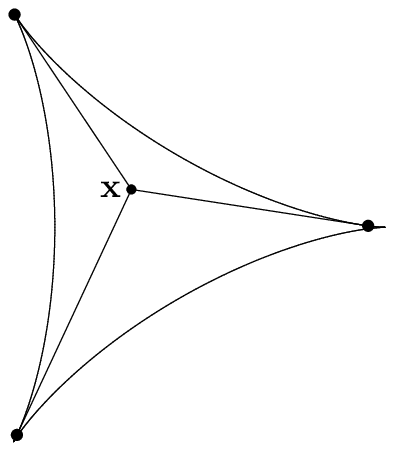}
\hspace{0.1in}
\includegraphics{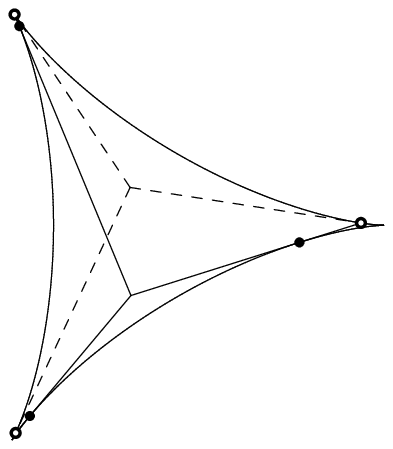}
\hspace{0.1in}
\includegraphics{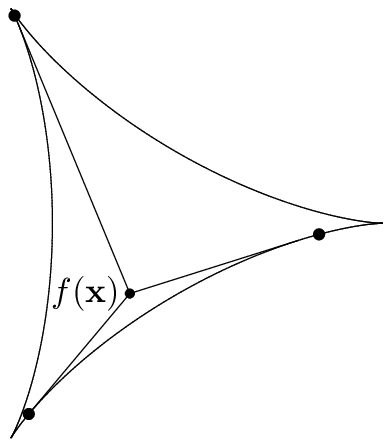}

\caption{Geometric definition of $f$. Any point 
$\mx{x} \in \CP^2$ lies on three tangent lines 
of $\mathcal{D}$ (counted with multiplicity). 
The point of tangency for each of these lines 
lies on another element of $\mathcal{D}^\vee$, 
as seen in Figure~\ref{F:K}. The resulting collection 
of three new tangent lines (again, counted with 
multiplicity) is concurrent at $f(\mx{x})$.}\label{F:geomdef}
\end{figure}

With this geometric definition in hand, we find polynomials 
that describe $f$.

\begin{prop}\label{P:formulas}
On $\C^2$, $f$ takes the form 
$(x,y) \mapsto (y^2 - 2x, x^2 - 2y)$. 
On $\CP^2$, this extends to 
$[x:y:z] \mapsto [y^2 - 2xz : x^2 - 2yz : z^2]$. 
On $L_\infty$, $f$ has the form $\zeta \mapsto 1/\zeta^2$.
\end{prop}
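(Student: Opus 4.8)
The plan is to translate the geometric definition of $f$ into symmetric functions of the roots of \eqref{E:taneqn} and then recognize the output as a pair of standard power-sum identities. Fix a generic $(x,y) \in \C^2$, meaning one for which the three roots $t_1, t_2, t_3$ of \eqref{E:taneqn} are distinct and nonzero; this excludes only $\mathcal{D}$ together with the finitely many lines on which some $t_i = 0$, i.e.\ a nowhere dense set. By Vieta's formulas for \eqref{E:taneqn}, the elementary symmetric functions of $t_1, t_2, t_3$ are $e_1 = x$, $e_2 = y$, $e_3 = 1$, consistent with \eqref{E:xyparam}. The three tangent lines through $(x,y)$ are $\check\gamma(t_1)$, $\check\gamma(t_2)$, $\check\gamma(t_3)$, and by the identity $\check{f}(\check\gamma(t)) = \check\gamma(1/t^2)$ their images under $\check{f}$ are the lines $\check\gamma(s_i)$ with $s_i = 1/t_i^2$. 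Since $s_1 s_2 s_3 = (t_1 t_2 t_3)^{-2} = 1$, the converse observation recorded after \eqref{E:xyparam} applies: these three lines are concurrent, and their common point $f(x,y) = (x',y')$ is given by $x' = s_1 + s_2 + s_3$ and $y' = s_1^{-1} + s_2^{-1} + s_3^{-1}$.

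It remains to evaluate the two power sums. First, $y' = t_1^2 + t_2^2 + t_3^2 = e_1^2 - 2e_2 = x^2 - 2y$. Second, because $t_1 t_2 t_3 = 1$, the numbers $1/t_1, 1/t_2, 1/t_3$ are the roots of the polynomial obtained from \eqref{E:taneqn} by reversing its coefficients, namely $u^3 - yu^2 + xu - 1$; hence their elementary symmetric functions are $y$, $x$, $1$, and $x' = (1/t_1)^2 + (1/t_2)^2 + (1/t_3)^2 = y^2 - 2x$. This establishes $f(x,y) = (y^2 - 2x, x^2 - 2y)$ on a dense subset of $\C^2$.

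To propagate this to all of $\CP^2$, I would homogenize the affine formula: substituting $x \mapsto x/z$, $y \mapsto y/z$ into $(y^2 - 2x,\, x^2 - 2y)$ and clearing denominators gives the candidate map $[x:y:z] \mapsto [y^2 - 2xz : x^2 - 2yz : z^2]$. The three quadratic forms $y^2 - 2xz$, $x^2 - 2yz$, $z^2$ have no common zero in $\CP^2$: vanishing of $z^2$ forces $z = 0$, whence $y^2 = x^2 = 0$ and $[x:y:z] = [0:0:0]$, which is excluded. So this triple defines a genuine morphism $\CP^2 \to \CP^2$, which is continuous and agrees with $f$ on a dense set; since $f$ was defined on the remaining exceptional locus precisely by continuous extension, the two agree everywhere. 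In particular one checks directly that the formula gives $[1:0:0] \mapsto [0:1:0]$ and $[0:1:0] \mapsto [1:0:0]$, matching the stipulated values. Finally, restricting $[x:y:z] \mapsto [y^2 - 2xz : x^2 - 2yz : z^2]$ to $L_\infty = \{z = 0\}$ yields $[x:y:0] \mapsto [y^2 : x^2 : 0]$, which in the affine coordinate $\zeta$ on $L_\infty \cong \CP^1$ for which $\gamma(0) = [1:0:0]$ sits at $\zeta = 0$ and $\gamma(\infty) = [0:1:0]$ at $\zeta = \infty$ is the map $\zeta \mapsto 1/\zeta^2$.

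The computation itself is routine symmetric-function bookkeeping; the only step that calls for genuine care is the last transition, namely justifying that an equality of maps valid on a dense subset of $\C^2$ extends to the continuity-defined map on all of $\CP^2$, together with the verification that the homogeneous formula is base-point free and therefore really is a morphism rather than merely a rational map.
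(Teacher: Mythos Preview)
Your proof is correct and follows essentially the same route as the paper's: express $f(x,y)$ as the pair of power sums $\sum 1/t_i^{\,2}$ and $\sum t_i^{\,2}$, then reduce these to $(y^2-2x,\,x^2-2y)$ via Vieta/Newton identities, and homogenize. You add welcome care about base-point-freeness and the density/continuity extension that the paper leaves implicit; the one superfluous step is excluding ``lines on which some $t_i=0$,'' since the constant term of \eqref{E:taneqn} is $-1$ and hence no root ever vanishes.
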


\begin{proof}
If $(x,y) \in \C^2$, and $t_1$, $t_2$, and $t_3$ are the 
roots of \eqref{E:taneqn}, then by the observations 
surrounding equation~\eqref{E:xyparam}, we have 
\[
f(x,y) 
= \left( 
  \frac{1}{{t_1}^2} + \frac{1}{{t_2}^2} + \frac{1}{{t_3}^2},\;
  {t_1}^2 + {t_2}^2 + {t_3}^2
  \right).
\]
Now we observe that 
\[
\left( \frac{1}{t_1} + \frac{1}{t_2} + \frac{1}{t_3} \right)^2 
- 2 \left( 
    \frac{1}{t_1 t_2} + \frac{1}{t_2 t_3} + \frac{1}{t_3 t_1} 
    \right)
= \frac{1}{{t_1}^2} + \frac{1}{{t_2}^2} + \frac{1}{{t_3}^2}
\]
and 
\[
(t_1 + t_2 + t_3)^2 - 2\, (t_1 t_2 + t_2 t_3 + t_3 t_1) 
= {t_1}^2 + {t_2}^2 + {t_3}^2,
\]
which proves the result on $\C^2$. The formula on 
$\CP^2$ is then obtained by a standard homogenization 
process. Because $L_\infty$ is defined by $z = 0$, 
on this line the map becomes $[x:y:0] \mapsto [y^2:x^2:0]$; 
if we set $\zeta = y/x$, the result for $L_\infty$ 
becomes clear. Alternatively, for $L_\infty$ 
we could use the observations made previously that 
$f(\check\gamma(t)) = \check\gamma(1/t^2)$ and that 
$\check\gamma(t)$ intersects $L_\infty$ at $[1:t:0]$, 
so $\zeta = t$.
\end{proof}

\section{Julia set, Fatou set, and Green function.}

Having defined the deltoid map $f$, we turn to some of 
its dynamical properties. Ideally, for any point 
$\mx{x} \in \CP^2$, we would like to be able to predict 
the behavior of its \emph{orbit} under $f$, which is the 
sequence $\mx{x},f(\mx{x}),f^2(\mx{x}),f^3(\mx{x}),\dots$, 
and also to say something about the orbits of points near 
$\mx{x}$. (Here and in the rest of the article $f^n$ denotes 
the composition of $f$ with itself $n$ times; this notation 
is standard in dynamical systems.) From the construction of 
$f$, we can already see that it has some exceptional 
properties: the deltoid $\mathcal{D}$ is forward invariant, 
meaning $f(\mathcal{D}) = \mathcal{D}$, and $f$ also sends 
each line tangent to $\mathcal{D}$ to another such line. 
These tangent lines will continue to be key in studying 
properties of $f$. 

Notice that $f$ commutes with the involution $\iota(x,y) 
= (y,x)$. The composition $\iota \circ f = f \circ \iota$ 
is studied by Uchimura in \cite{kU01,kU07,kU09} and Nakane 
in \cite{sN08}. The dynamical properties of $f$ and 
$\iota \circ f$ are essentially identical.

A fundamental tool in complex dynamics is the partition 
of the dynamical space into the Fatou set, where the 
dynamics are ``simple,'' and the Julia set, where the 
dynamics are ``chaotic.'' More precisely, the Fatou set 
$\Omega = \Omega_f$ is the largest open set of $\CP^2$ on 
which the iterates of $f$ locally form an equicontinuous 
family; thus if $\mx{x}$ and $\mx{y}$ are points of $\Omega$ 
that are sufficiently near each other, then $f^n(\mx{x})$ 
and $f^n(\mx{y})$ remain close (in $\CP^2$) as $n$ increases. 
The Julia set $J = J_f$ is the complement of $\Omega$; thus 
if $\mx{x}$ is in $J$ and $\mx{y}$ is close to $\mx{x}$, then 
$f^n(\mx{x})$ and $f^n(\mx{y})$ may be very far apart.

On $L_\infty$, as we have seen, $f$ reduces to the power 
map $\zeta \mapsto 1/\zeta^2$. This map of $\CP^1$ exchanges 
$0$ and $\infty$ (in $\CP^2$, these are the points $[1:0:0]$ 
and $[0:1:0]$), and so these two points form a period $2$ 
orbit. If $|\zeta| \ne 1$, then $\zeta^{(-2)^n}$ approaches 
the previously observed period $2$ orbit. If $|\zeta| = 1$, 
then $\zeta^{(-2)^n}$ remains on the unit circle, while 
some nearby points are drawn to the $\{0,\infty\}$ orbit. 
Thus the Julia set of $f$ on $L_\infty$ is the circle at 
infinity, and the Fatou set in $L_\infty$ has two components, 
one containing $0$ and the other $\infty$.

To determine the Julia and Fatou sets of $f$ in $\C^2$, 
we introduce the \emph{Green function} $G = G_f$ of $f$, 
which is defined \cite{eBmJ00,jHpP94} by 
\[
G(\mx{x}) 
= \lim_{n\to\infty} 
  \frac{1}{2^n} \log^+ \left\|f^n(\mx{x})\right\|,
\]
where ${\log^+} = \max\,\{\log,0\}$, and $\|\cdot\|$ 
is any norm on $\C^2$. This function measures how 
quickly points of $\C^2$ escape to infinity under 
iteration of $f$; it is zero precisely for those points 
whose orbits are bounded, which comprise the set $K$. 
It is a continuous, subharmonic function on $\C^2$, and 
it satisfies the functional equation $G(f(x,y)) = 2G(x,y)$.

For most self-maps of $\C^2$, the Green function cannot 
be explicitly calculated. The deltoid map is an exception.

\begin{prop}\label{P:green}
The Green function $G$ of the deltoid map $f$ can 
be calculated as follows: given $(x,y) \in \C^2$, let $t_1$, 
$t_2$, and $t_3$ be the solutions to \eqref{E:taneqn}. 
Then 
\begin{equation}\label{E:green}
G(x,y) 
= \log \max 
  \left\{ |t_1|,\, |t_2|,\, |t_3|,\, 
          \frac{1}{|t_1|},\, \frac{1}{|t_2|},\, \frac{1}{|t_3|} 
  \right\}.
\end{equation}
\end{prop}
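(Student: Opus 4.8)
The plan is to show that the function $\Phi(x,y)$ given by the right-hand side of \eqref{E:green} satisfies the functional equation $\Phi\circ f = 2\,\Phi$ and differs from $\log^{+}\|(x,y)\|$ by an amount bounded uniformly over $\C^{2}$. Together with the defining limit for $G$ and the functional equation $G\circ f = 2\,G$, these two properties will force $G=\Phi$.

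First I would record how $f$ acts on tangent-line parameters. As in the proof of Proposition~\ref{P:formulas} and the converse observation following \eqref{E:xyparam}: if $t_{1},t_{2},t_{3}$ are the solutions of \eqref{E:taneqn} for $(x,y)$, so that $t_{1}t_{2}t_{3}=1$, then the solutions of \eqref{E:taneqn} for $f(x,y)$ are $1/t_{1}^{2},1/t_{2}^{2},1/t_{3}^{2}$. Writing $M=\max_{i}|t_{i}|$ and $m=\min_{i}|t_{i}|$, we have $M\ge 1\ge m$ since $|t_{1}t_{2}t_{3}|=1$, and $\Phi(x,y)=\log\max\{M,1/m\}$. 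Replacing the multiset $\{|t_{i}|\}$ by $\{1/|t_{i}|^{2}\}$ sends $M$ to $1/m^{2}$ and $m$ to $1/M^{2}$, so $\Phi(f(x,y))=\log\max\{M^{2},1/m^{2}\}=2\,\Phi(x,y)$, and hence $\Phi(f^{n}(x,y))=2^{n}\,\Phi(x,y)$ for all $n\ge 0$.

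Next I would compare $\Phi$ with $\log^{+}\|(x,y)\|$. From \eqref{E:xyparam}, $|x|=|t_{1}+t_{2}+t_{3}|\le 3M$ and $|y|=|1/t_{1}+1/t_{2}+1/t_{3}|\le 3/m$, which with $M\ge 1$ gives $\max\{M,1/m\}\ge\tfrac13\max\{1,|x|,|y|\}$. In the other direction, Cauchy's bound for the roots of a monic polynomial applied to \eqref{E:taneqn} gives $M\le 1+\max\{1,|x|,|y|\}$; and since $t_{1}t_{2}t_{3}=1$, the numbers $1/t_{1},1/t_{2},1/t_{3}$ are the roots of $s^{3}-ys^{2}+xs-1$, so the same bound gives $1/m\le 1+\max\{1,|x|,|y|\}$. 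Hence $\tfrac13\max\{1,|x|,|y|\}\le\max\{M,1/m\}\le 3\max\{1,|x|,|y|\}$, and taking logarithms (using that any two norms on $\C^{2}$ are comparable) produces a constant $C$ with $\bigl|\Phi(x,y)-\log^{+}\|(x,y)\|\bigr|\le C$ for every $(x,y)\in\C^{2}$. Getting this comparison right is the main point: it must be \emph{first order}, that is, linear in $\max\{|x|,|y|\}$, since a crude estimate such as $1/m\le M^{2}$ would only yield $\Phi\le 2\log^{+}\|\cdot\|+O(1)$, too weak for the last step. The trick is that $1/t_{1},1/t_{2},1/t_{3}$ satisfy their own monic cubic, so Cauchy's bound applies to them directly.

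Finally, for any $(x,y)\in\C^{2}$ and any $n$, applying the previous bound at the point $f^{n}(x,y)$ and using $\Phi(f^{n}(x,y))=2^{n}\Phi(x,y)$ gives
\[
\Bigl|\,\tfrac{1}{2^{n}}\log^{+}\|f^{n}(x,y)\|-\Phi(x,y)\,\Bigr|
=\tfrac{1}{2^{n}}\bigl|\log^{+}\|f^{n}(x,y)\|-\Phi(f^{n}(x,y))\bigr|
\le\tfrac{C}{2^{n}}\,.
\]
Letting $n\to\infty$, the left side tends to $G(x,y)$ by the definition of the Green function, so $G(x,y)=\Phi(x,y)$, which is exactly \eqref{E:green}.
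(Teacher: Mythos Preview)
Your argument is correct and shares the paper's key observation: $f$ acts on the tangent parameters by $t_i \mapsto 1/t_i^{2}$, so $\tau=\max\{|t_i|,1/|t_i|\}$ squares under $f$, and $\|(x,y)\|_\infty$ is comparable to $\tau$. The paper carries this out as a direct limit computation, writing $\|f^{n}(x,y)\|_\infty=\max\bigl\{\bigl|\sum t_i^{2^{n}}\bigr|,\bigl|\sum t_i^{-2^{n}}\bigr|\bigr\}$ and observing that the ratio to $\tau^{2^{n}}$ is bounded above by $3$; you instead invoke the standard characterization of $G$ as the unique function with $\Phi\circ f=2\Phi$ and $|\Phi-\log^{+}\|\cdot\||=O(1)$. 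The content is essentially the same, but your packaging makes the two-sided comparison explicit: applying Cauchy's root bound to both $t^{3}-xt^{2}+yt-1$ and its reciprocal $s^{3}-ys^{2}+xs-1$ is a tidy way to control $1/m$ directly, whereas the paper's write-up records only the upper bound on the ratio and leaves the matching lower bound implicit.
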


Notice that we do not need to use $\log^+$ in 
\eqref{E:green}, because the set over which the 
maximum is taken contains at least one element 
that is greater than or equal to $1$.

\begin{proof}[Proof of Proposition~\ref{P:green}]
Using the $L^\infty$ norm on $\C^2$, we have 
\[
G(x,y) = \lim_{n\to\infty} 
\frac{1}{2^n} {\log^+} \max \left\{ 
  \left| {t_1}^{2^n} + {t_2}^{2^n} + {t_3}^{2^n} \right|,\; 
  \left| \frac{1}{{t_1}^{2^n}} + \frac{1}{{t_2}^{2^n}} 
        + \frac{1}{{t_3}^{2^n}} \right| \right\}.
\]
Set $\tau = \max \big\{ |t_1|, |t_2|, |t_3|, |t_1|\inv, 
|t_2|\inv, |t_3|\inv \big\}$. Then $\tau \ge 1$, and we have 
\begin{gather}
\frac{1}{2^n} \log \max 
\left\{ 
  \left| {t_1}^{2^n} + {t_2}^{2^n} + {t_3}^{2^n} \right|,\, 
  \left| \frac{1}{{t_1}^{2^n}} + \frac{1}{{t_2}^{2^n}} 
        + \frac{1}{{t_3}^{2^n}} \right| \right\}
- \log \tau \label{Eq:greendiff} \\
= \frac{1}{2^n} \log \max 
\left\{ 
  \frac{\left| {t_1}^{2^n} + {t_2}^{2^n} + {t_3}^{2^n} \right|}
       {{\tau}^{2^n}},\, 
  \frac{1}{{\tau}^{2^n}}
  \left| \frac{1}{{t_1}^{2^n}} + \frac{1}{{t_2}^{2^n}} 
        + \frac{1}{{t_3}^{2^n}} \right| \right\}. \label{Eq:greenquot}
\end{gather}
By our choice of $\tau$, the maximum of the set in 
\eqref{Eq:greenquot} is bounded by $3$. Therefore, as $n$ tends to 
$\infty$, the difference in \eqref{Eq:greendiff} tends to $0$. This 
shows that $G(x,y) = \log\tau$, as claimed.
\end{proof}

In terms of the Green function, $\Omega$ is the 
set of points where $dd^c\,G$ vanishes. Here 
$dd^c = \frac{i}{2\pi}\partial\conj\partial$ 
is the so-called \emph{pluri-Laplacian}, and the 
derivatives should properly be interpreted as 
currents (``differential forms with distributional 
coefficients''); for us, however, it is sufficient 
to know where $dd^c\,G = 0$. Because $dd^c\,{\log^+}|t|$ 
vanishes except on the unit circle $S^1$, we obtain 
the following characterization of $J$.

\begin{prop}\label{P:julia}
The Julia set of $f$ is the set $J$ of points 
$[x:y:z] \in \CP^2$ such that the polynomial 
$z(t^3 - 1) - xt^2 + yt$ has at least one root on $S^1$.
\end{prop}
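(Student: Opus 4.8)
The plan is to elaborate the observation recalled just before the statement --- that $dd^c\log^+\!|t|$ is supported exactly on $\{|t|=1\}$ --- by writing $G$ locally in terms of such functions. Write $S$ for the set in the statement, the locus of $[x:y:z]$ for which $p(t) := z(t^3-1) - xt^2 + yt$ has a root on $S^1$, and recall that $\Omega$ is the open set on which $dd^cG$ vanishes, so that $J = \mathrm{supp}(dd^cG)$. On $L_\infty$ the polynomial $p$ has roots $0$ and $y/x$, so $S\cap L_\infty$ is the circle at infinity, which is exactly $J\cap L_\infty$ by the analysis of $f$ on $L_\infty$ already carried out; hence it suffices to prove $J\cap\C^2 = S\cap\C^2$, where by Proposition~\ref{P:green} we have $G = \log\tau$ with $\tau = \max\{|t_1|,|t_2|,|t_3|,|t_1|\inv,|t_2|\inv,|t_3|\inv\}$ and $t_1t_2t_3 = 1$.

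First I would show $J\cap\C^2\subseteq S$. Take $\mx{x}_0\in\C^2\setminus S$. None of $t_1,t_2,t_3$ lies on $S^1$, and since their product is $1$ they cannot all lie in $\{|t|<1\}$, nor all in $\{|t|>1\}$; so exactly one of these two regions contains exactly one root (counted with multiplicity), say $t_\ast$, which, being alone, is a simple root and hence holomorphic in $\mx{x}$ near $\mx{x}_0$ (and nonzero). A direct estimate --- the two other roots have product $t_\ast^{\mp1}$ and lie in the region opposite to $t_\ast$, so their moduli and reciprocal moduli are all at most $|t_\ast|^{\pm1}$ --- gives $\tau = |t_\ast|^{\pm1}$ on a neighborhood, whence $G = \bigl|\log|t_\ast|\bigr| = \Re(\pm\log t_\ast)$ is pluriharmonic there and $dd^cG = 0$. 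Since $S$ is compact, $\C^2\setminus S$ is open, and so $J\cap\C^2\subseteq S$.

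For the reverse inclusion, observe that every point of $S\cap\C^2$ is of exactly one of two types: (i) exactly one value $t$ with $|t|=1$ is a root of $p$, it is a simple root, and the remaining two roots are simple with one in $\{|t|<1\}$ and one in $\{|t|>1\}$; or (ii) all three roots of $p$ lie on $S^1$, in which case $y = \bar{x}$ and the point lies in $K$. In case (i), fix such a point $\mx{x}_0$ and label the roots so that $|t_1| < 1 < |t_3|$ while $|t_2| = 1$ at $\mx{x}_0$. On a neighborhood, $t_2$ and $t_3$ are holomorphic, the strict inequalities $|t_1|<1<|t_3|$ persist, and a short computation gives $\tau = |t_3|\max\{1,|t_2|\}$, so $G = \log|t_3| + \log^+\!|t_2|$. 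Here $\log|t_3| = \Re\log t_3$ is pluriharmonic ($t_3$ being holomorphic and nonvanishing), whereas $dd^c\log^+\!|t_2|$ --- the pullback by $t_2$ of the uniform measure on $S^1$ --- is nonzero near $\mx{x}_0$, because $t_2$ is holomorphic, non-constant (no line $\{z(c^3-1)-xc^2+yc=0\}$ can contain a neighborhood of $\mx{x}_0$), and satisfies $|t_2(\mx{x}_0)| = 1$. Hence $dd^cG\ne 0$ near $\mx{x}_0$ and $\mx{x}_0\in J$. In case (ii), if $G$ were pluriharmonic near $\mx{x}_0\in K$ then, being $\ge 0$ with $G(\mx{x}_0)=0$, its restriction to every complex line through $\mx{x}_0$ would be a harmonic function with an interior minimum, hence locally constant equal to $0$; this would force a whole neighborhood of $\mx{x}_0$ into $K$, which is impossible since $K\subseteq\EP$ has empty interior in $\CP^2$. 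So again $dd^cG\ne 0$ near $\mx{x}_0$ and $\mx{x}_0\in J$. Combining the two inclusions yields $J\cap\C^2 = S\cap\C^2$, completing the proof.

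The substance of the argument is bookkeeping rather than anything deep: one must identify, on each side of $S$, which root plays the role of the isolated root $t_\ast$ and which is the root crossing $S^1$, and check that the moduli of the remaining roots are genuinely dominated --- a Newton's-identity-style computation parallel to the one already done for Proposition~\ref{P:green}. The one point needing a little care is the behavior near $L_\infty$, where $G$ is unbounded; above I simply quote the earlier description of $J\cap L_\infty$, but one could instead pass to an affine chart at infinity and verify directly that, off the circle at infinity, two roots of $p$ stay away from $S^1$ while the third tends to $0$ or $\infty$, so that $G$ is again locally of the form $\pm\log|t_\ast|$ with $t_\ast$ holomorphic on $\C^2$ and the extended current $dd^cG$ vanishes there.
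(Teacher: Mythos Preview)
Your proof is correct and follows the same approach as the paper --- deduce the support of $dd^cG$ from the explicit Green function in Proposition~\ref{P:green} together with the fact that $dd^c\log^+|t|$ is supported on $S^1$. The paper treats this proposition as an immediate consequence of that one observation and does not give a separate proof; your argument supplies the case analysis (isolated root off $S^1$, single simple root on $S^1$, all roots on $S^1$) and the minimum-principle step on $K$ that the paper leaves implicit.
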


Given our geometric definition of $f$, this result 
is not surprising: as we have seen, the line 
$\check\gamma(t) \in \mathcal{D}^\vee$ intersects 
$L_\infty$ at $[1:t:0]$, and the circle at infinity, 
where $|t| = 1$, is precisely the Julia set of 
$f|_{L_\infty}$.

Nakane \cite{sN08} provided a description of the foliation 
of $J$ by ``stable disks'' of the circle at infinity, 
as well as how external rays land at points of $K$. 
We shall take a different perspective and consider the 
intersection of $J$ with complex lines in $\C^2$ parallel 
to the $x$- and $y$-axes. In order to describe the result, 
however, we must invoke some classical differential geometry. 

Given a curve $C$ and a point $O$ in $\EP$, the 
{\em pedal curve of $C$ with respect to $O$} is the 
locus of points $P$ such that $P$ is the orthogonal 
projection of $O$ onto a line tangent to $C$. (See 
Figure~\ref{F:pedal} for some examples.)

\begin{figure}[h]

\begin{tabular}{rll}
\includegraphics{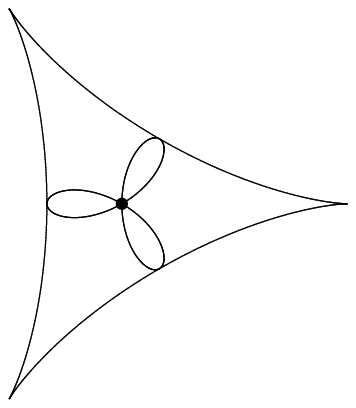}&
\includegraphics{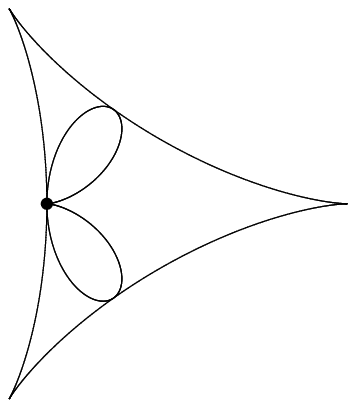}&
\includegraphics{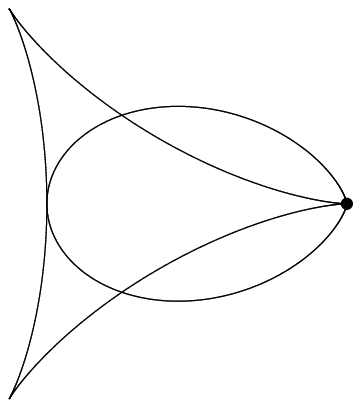}\\
\includegraphics{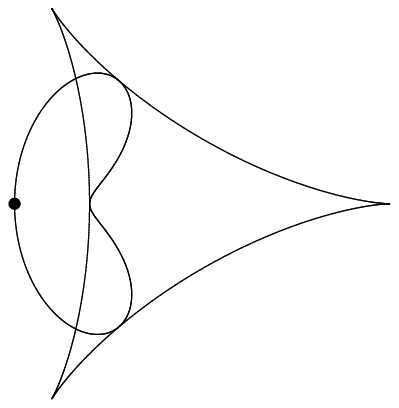}&
\includegraphics{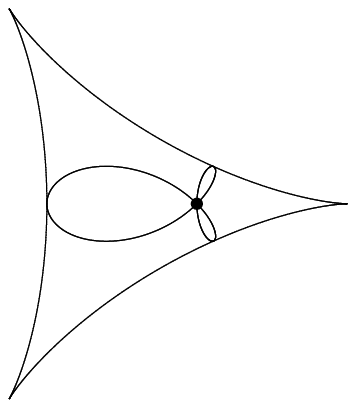}&
\includegraphics{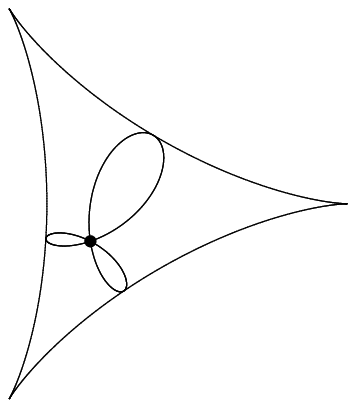}
\end{tabular}

\caption{Some pedal curves of the real deltoid in $\EP$. 
In each image the point $O$ is indicated by a dot. 
{\sc Top:} With respect to the center (a trifolium), 
with respect to the point opposite a cusp (a bifolium), 
and with respect to a cusp (a simple folium).
{\sc Bottom:} With respect to an exterior point, 
with respect to an interior point on an axis of symmetry, 
and with respect to a generic interior point.}\label{F:pedal}
\end{figure}

At this point we can state our first main result, 
which says that the Julia set of the deltoid map on 
$\C^2$ geometrically decomposes into a disjoint union 
of pedal curves of the real deltoid.

\begin{theorem}\label{T:pedal}
The intersection of $J$ with a line $L \ne L_\infty$ 
through $[1:0:0]$ (that is, parallel to the $x$-axis 
in $\C^2$) is the pedal curve of the real deltoid with 
respect to the $x$-coordinate of $L \cap \EP$. 
Likewise, the intersection of $J$ with a line parallel 
to the $y$-axis is the pedal curve of the real deltoid 
with respect to the $y$-coordinate of the intersection 
of this line and $\EP$.
\end{theorem}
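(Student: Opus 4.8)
The plan is to derive a parametrization of $J\cap L$ from the algebraic description of $J$ in Proposition~\ref{P:julia}, to compute the relevant pedal curve directly from the tangent-line equation~\eqref{E:taneqn}, and to check that the two coincide. The algebraic side is immediate: by Proposition~\ref{P:julia} with $z=1$, a point $(x,y)\in\C^2$ lies in $J$ exactly when the cubic $t^3-xt^2+yt-1$ has a root on $\{|t|=1\}$, so for $L=\{y=c\}$ the point $(x,c)$ lies in $J\cap L$ precisely when $t^3-xt^2+ct-1=0$ for some $t$ with $|t|=1$. Solving this for $x$ and using $1/t=\bar t$, $1/t^2=\bar t^2$ gives $x=t+c\bar t-\bar t^2$, so $J\cap L$ is swept out, as $t$ runs over the unit circle, by the point of $L$ whose $x$-coordinate is $t+c\bar t-\bar t^2$.

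For the pedal side, write $O$ for the $x$-coordinate of $L\cap\EP$; since $\EP$ is $\{y=\bar x\}$ this is $O=\bar c$. For $|t|=1$ the tangent line to $\mathcal{D}$ at $\gamma(t)$, restricted to $\EP$ and written in the $x$-coordinate, is the real line
\[
\ell_t=\{\,x\in\C:\ t^3-t^2 x+t\bar x-1=0\,\},
\]
namely equation~\eqref{E:taneqn} restricted to $\EP$; as $t$ runs over the unit circle these are exactly the tangent lines of the real deltoid $\mathcal{D}_\EP$. Two elementary facts make $\ell_t$ explicit: it passes through the point $x=t$ of the unit circle $\mathcal{C}_\EP$ (read off the equation), and its direction is proportional to $i(t-\bar t^2)$ (obtained by differentiating the parametrization $\theta\mapsto 2e^{i\theta}+e^{-2i\theta}$ of $\mathcal{D}_\EP$). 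From these, the foot of the perpendicular from $O$ to $\ell_t$ is a short computation---using $|t|=1$, so that $|t-\bar t^2|^2=2-2\Re(t^3)$---giving an explicit expression in $t$, $\bar t$, $O$, and $\bar O$; as $t$ varies over the unit circle, these feet are precisely the pedal curve of $\mathcal{D}_\EP$ with respect to $O$.

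It then remains to match the two parametrizations. A unit-modulus root $t$ of the cubic labels exactly the tangent line $\ell_t$ passing through the given point, so the natural comparison is through the same parameter $t$: with $O=\bar c$, one checks that the explicit formula for the foot of the perpendicular to $\ell_t$ agrees with the map $t\mapsto t+c\bar t-\bar t^2$ of the first paragraph once $J\cap L$ has been transported to $\EP$ by the identification of $L$ with $\EP$ used in the statement. This is the step that fixes the normalization, so that the two curves genuinely coincide and are not merely affinely equivalent; granting it, $J\cap L$ is the stated pedal curve. Two bookkeeping points finish the argument. The three parameters with $t^3=1$ are the cusps of $\mathcal{D}_\EP$, where $t-\bar t^2$ degenerates and $\ell_t$ is the cusp tangent through the center of $\mathcal{D}$---only finitely many values, handled by continuity---and one verifies that the number of unit-modulus roots of $t^3-xt^2+ct-1$ matches the number of tangent lines of $\mathcal{D}_\EP$ through the corresponding pedal point, so the parametrized descriptions agree as sets. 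For a line parallel to the $y$-axis the same argument applies with $x$ and $y$ interchanged: this is the substitution $t\mapsto 1/t$, i.e.\ the involution $\iota$, which fixes $\mathcal{D}$ and $\EP$, preserves $J$, and carries each family of axis-parallel lines to the other.

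The real work is the pedal-side computation together with this comparison---producing the explicit foot-of-perpendicular formula for $\ell_t$ and verifying termwise that, for $O$ the $x$-coordinate of $L\cap\EP$, it equals $t+c\bar t-\bar t^2$ under the identification of $L$ with $\EP$, in particular getting that identification (hence the Euclidean normalization on $\EP$) exactly right.
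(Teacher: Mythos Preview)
Your outline is on the right track and is close in spirit to the paper's argument, but the step you yourself flag as ``the real work'' is a genuine gap, not just bookkeeping. If you actually carry out the foot-of-perpendicular computation from $O=\bar c$ onto $\ell_t$, you get
\[
F(t)=\tfrac12\big(\bar c + t + c\bar t - \bar t^{\,2}\big),
\]
which is \emph{not} equal to your parametrization $x=t+c\bar t-\bar t^{\,2}$ of $J\cap L$ in the $x$-coordinate. The two differ by the affine map $x\mapsto \tfrac12(x+\bar c)$. So without a principled identification of $L$ with $\EP$, the comparison fails; and the theorem is asserting an honest equality of curves, not merely an affine equivalence.

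The paper supplies exactly this missing ingredient. The identification is the Hermitian-orthogonal projection $\mathrm{pr}_{\EP}(x,y)=\big(\tfrac{x+\bar y}{2},\tfrac{y+\bar x}{2}\big)$, which on $L=\{y=c\}$ acts in the $x$-coordinate by $x\mapsto\tfrac12(x+\bar c)$ --- precisely the discrepancy above. The reason this projection is the \emph{right} map, rather than an ad hoc similarity, is a short lemma: for $|t|=1$, $\mathrm{pr}_{\EP}$ carries the complex tangent line $\check\gamma(t)$ into itself. Granting this, a point of $J\cap L$ lying on $\check\gamma(t)$ projects to a point of $\check\gamma(t)\cap\EP$, and since $\mathrm{pr}_{\EP}$ is Hermitian-orthogonal the image is automatically the foot of the perpendicular from $(\bar c,c)=L\cap\EP$ onto $\ell_t$. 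The paper then verifies this by the same kind of direct computation you sketch (using that $(1,t)\perp(1,-t)$ when $|t|=1$), obtaining the formula for $F(t)$ above on both sides. Your approach becomes a complete proof once you insert this projection and its tangent-line-preserving property; without it, the ``granting it'' clause is granting the theorem.
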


To prove this result, we will use the following 
projection from $\C^2$ to $\EP$:
\[
\mathrm{pr}_{\EP}(x,y) 
= \left(\frac{x+\bar{y}}{2},\frac{y+\bar{x}}{2}\right).
\]
This projection is orthogonal with respect to the 
standard Hermitian inner product on $\C^2$, namely 
$(x_1,y_1)\cdot(x_2,y_2) = x_1 \overline{x_2} + y_1 \overline{y_2}$. 
Conveniently, it also preserves each complex line that is 
tangent to $\mathcal{D}$ at a point of $\mathcal{D}_{\EP}$, 
which is the content of the next lemma.

\begin{lemma}
If $|t| = 1$ and $(x,y) \in \check\gamma(t)$, then also 
$\mathrm{pr}_{\EP}(x,y) \in \check\gamma(t)$.
\end{lemma}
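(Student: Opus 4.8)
The plan is to work directly with the defining equation \eqref{E:taneqn} of the tangent line $\check\gamma(t)$, using the hypothesis $|t| = 1$ to replace $1/t$ by $\bar t$ at the crucial moment. Recall that a point $(x,y) \in \C^2$ lies on $\check\gamma(t)$ precisely when $t^3 - t^2 x + t y - 1 = 0$; dividing by $t$ (which is nonzero) and rearranging, membership is equivalent to $t^2 - t x + y - 1/t = 0$, i.e.\ $y = t x - t^2 + 1/t$. When $|t| = 1$ we have $1/t = \bar t$, so the condition becomes $y = t x - t^2 + \bar t$. The goal is to show that if $(x,y)$ satisfies this, then so does its image $\mathrm{pr}_{\EP}(x,y) = \big(\tfrac{x+\bar y}{2}, \tfrac{y + \bar x}{2}\big)$; that is, writing $x' = \tfrac{x+\bar y}{2}$ and $y' = \tfrac{y+\bar x}{2}$, I must verify $y' = t x' - t^2 + \bar t$.

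The key step is a short computation. Starting from $y = tx - t^2 + \bar t$, conjugate both sides (again using $|t|=1$, so $\bar{\bar t} = t$ and $\overline{t^2} = \bar t^2 = 1/t^2$): this gives $\bar y = \bar t\, \bar x - \bar t^2 + t$. Now I substitute into the expressions for $x'$ and $y'$ and plug into the desired identity $y' = t x' - t^2 + \bar t$; after multiplying through by $2$ this reads $y + \bar x = t(x + \bar y) - 2t^2 + 2\bar t$. Using $y = tx - t^2 + \bar t$ and $t\bar y = t(\bar t\bar x - \bar t^2 + t) = \bar x - \bar t + t^2$ (here $t\bar t = 1$ and $t\bar t^2 = \bar t$), the left side becomes $(tx - t^2 + \bar t) + \bar x$ and the right side becomes $tx + (\bar x - \bar t + t^2) - 2t^2 + 2\bar t = tx + \bar x - t^2 + \bar t$. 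These agree, which completes the verification.

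I expect the only real obstacle to be bookkeeping: keeping straight which quantities get conjugated and remembering to use $|t| = 1$ consistently (so that $\bar t = 1/t$ and $\overline{t^2} = 1/t^2$, but $x, y$ are \emph{not} subject to any reality constraint). It is worth noting explicitly in the writeup why dividing \eqref{E:taneqn} by $t$ is legitimate and why the cusps $t \in \{1,\omega,\omega^2\}$, where $\gamma'(t)$ vanishes, cause no trouble: equation \eqref{E:taneqn} still defines the correct tangent line there, and the computation above never divided by anything except $t$ itself. Finally, since $\mathrm{pr}_{\EP}$ is claimed to be the orthogonal projection onto $\EP$ with respect to the standard Hermitian form, one could alternatively phrase the argument geometrically—$\EP$ is a totally real plane, and each such tangent line $\check\gamma(t)$ with $|t|=1$ meets $\EP$ in a real line for which the projection is exactly the real-plane orthogonal projection—but the direct algebraic check is cleaner and self-contained, so that is the route I would take.
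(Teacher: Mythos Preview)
Your proof is correct and follows essentially the same approach as the paper: both conjugate the defining equation \eqref{E:taneqn}, use $|t|=1$ to convert $\bar t$ to $t^{-1}$, and then take the appropriate linear combination to see that $\big(\tfrac{x+\bar y}{2},\tfrac{y+\bar x}{2}\big)$ satisfies the same equation. The paper's version is marginally more streamlined in that it keeps the equation in the form $t^3 - t^2 x + ty - 1 = 0$ throughout (conjugate, multiply by $t^3$, subtract, divide by $2$), whereas you first solve for $y$; but this is purely cosmetic.
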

\begin{proof}
By assumption, $t$ and $(x,y)$ satisfy equation 
\eqref{E:taneqn} $t^3 - t^2 x + ty - 1 = 0$, 
as well as its conjugate 
$\bar{t}^3 - \bar{t}^2 \bar{x} + \bar{t} \bar{y} - 1 = 0$. 
Because $|t| = 1$, we have $\bar{t} = t^{-1}$, and so, 
after multiplying the conjugate of \eqref{E:taneqn} by 
$t^3$ we obtain $1 - t \bar{x} + t^2 \bar{y} - t^3 = 0$. 
Subtracting this latter equation from \eqref{E:taneqn} 
and dividing by $2$ produces 
\[
t^3 - t^2 \!\left(\frac{x + \bar{y}}{2}\right) 
+ t \!\left(\frac{y + \bar{x}}{2}\right) - 1 = 0
\]
as desired.
\end{proof}

A line in $\C^2$ parallel to the $x$-axis is determined 
by its $y$-coordinate. Let $L_\alpha$ be the line with 
equation $y = \bar\alpha$. The intersection of $L_\alpha$ 
with $\EP$ is 
\[
L_\alpha \cap \EP = \{(\alpha,\bar\alpha)\}.
\]
The restriction of $\mathrm{pr}_{\EP}$ to $L_\alpha$ is 
a bijection, whose inverse $\lambda_\alpha : \EP \to L_\alpha$ 
is the affine map 
\[
\lambda_\alpha(x,\bar{x}) = (2x - \alpha, \bar\alpha).
\]
Notice, however, that with respect to the metrics 
induced on $\EP$ and $L_\alpha$ by the Hermitian inner 
product on $\C^2$, $\lambda_\alpha$ is not just affine, but 
a similarity. To prove Theorem~\ref{T:pedal}, therefore, 
it suffices to show that $\mathrm{pr}_{\EP}(J \cap L_\alpha)$ 
is the pedal curve of $\mathcal{D} \cap \EP$ with respect to 
$(\alpha,\bar\alpha)$. Or what is the same, we need to show 
that for all $t \in S^1$, the point $(x,\bar{x}) \in \EP$ 
is the orthogonal projection of $(\alpha,\bar\alpha)$ 
onto $\check\gamma(t) \cap \EP$ if and only if 
$\lambda_\alpha(x,\bar{x})$ is in $\check\gamma(t)$.

If $|t| = 1$, then the Hermitian inner product of the vectors 
$(1,t)$ and $(1,-t)$ is zero, so any two lines in $\C^2$ 
of the form $y = tx + b_1$ and $y = -tx + b_2$ are orthogonal.

\begin{proof}[Proof of Theorem~\ref{T:pedal}]
Let $|t| = 1$. The intersection of 
$\check\gamma(t)$ and $\EP$ has the equation 
\[
t^3 - t^2 x + t \bar{x} - 1 = 0
\text{,\qquad or}\qquad
\bar{x} = tx - t^2 + t\inv\text.
\]
The line through $(\alpha,\bar\alpha)$ that is 
orthogonal to $\check\gamma(t)$ is therefore 
\[
\bar{x} - \bar\alpha = -t(x - \alpha)\text.
\]
These latter two equations together imply 
(by eliminating $\bar{x}$) that 
\[
tx - t^2 + t\inv = \bar\alpha - t(x - \alpha)\text,
\]
and solving for $x$ produces 
\[
x = \frac{1}{2} \big( \alpha + t + \bar\alpha t\inv - t^{-2}\big)\text.
\]
On the other hand, if $\lambda_\alpha(x,\bar{x}) \in J$, 
then 
\[
t^3 - t^2 (2x - \alpha) + t\bar\alpha - 1 = 0\text,
\]
which produces the same solution for $x$, as desired.

The proof for the intersection of $J$ with a line 
parallel to the $y$-axis is virtually identical.
\end{proof}

From this geometric description of the intersection of 
$J$ with a horizontal or vertical line, we can find an 
algebraic equation for $J$ in $\C^2$. 

\begin{corollary}\label{C:julia}
The Julia set of $f$ is the real hypersurface in $\C^2$
having the equation
\[
2 \Re (x - \bar{y})^3 + \Re (x - \bar{y})^2 (\bar{x}^2 - y^2) = 0.
\]
\end{corollary}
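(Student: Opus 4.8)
The plan is to combine the explicit parametrization of $J\cap L_\alpha$ obtained in the proof of Theorem~\ref{T:pedal} with an elimination of the circle parameter $t$, and then to recognize the resulting relation as the stated real equation. Recall that in that proof, for $|t|=1$ the point of $J\cap L_\alpha$ (before applying $\lambda_\alpha$) was found to be
\[
x = \tfrac{1}{2}\big(\alpha + t + \bar\alpha\, t\inv - t^{-2}\big),
\qquad \bar{x} = t x - t^2 + t\inv,
\]
with $y = \bar\alpha$ on $L_\alpha$. The first step is to write $t$ on the unit circle as $t = e^{i\theta}$, so $t\inv = \bar t$, and to translate the ``$|t|=1$'' constraint into a real-algebraic condition on $(x,y)$. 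The cleanest route: from $x = \tfrac12(\alpha + t + \bar\alpha\bar t - \bar t^2)$ and its conjugate $\bar x = \tfrac12(\bar\alpha + \bar t + \alpha t - t^2)$, form the combination $x - \bar y = x - \alpha$ (since $y = \bar\alpha$), which on $L_\alpha$ is exactly what the affine map $\lambda_\alpha$ rescales; I expect $x - \bar y$ to be a simple expression in $t$ — something proportional to $t\inv(t - 1)$ times a unit-ish factor — so that $|t|=1$ becomes a single polynomial identity.

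Concretely, I would compute $2(x-\bar y) = 2(x-\alpha) = t + \bar\alpha t\inv - t^{-2} - \alpha$ and look for the relation it satisfies together with its conjugate. The key algebraic step is to eliminate $t$: we have two equations (the displayed formula for $x$ and the tangent-line relation $\bar x = tx - t^2 + t\inv$) in the single unknown $t$ on the circle, so generically $t$ is determined, and substituting back produces one real equation in $(x,\bar x,\alpha,\bar\alpha) = (x,\bar x, y, \bar y)$. Rather than eliminate blindly, I would exploit the structure: set $w = t\inv$, note $|w| = 1 \iff w\bar w = 1$, solve the linear-in-lower-order relations for $w$ in terms of $x - \bar y$ and its conjugate, and impose $w\bar w = 1$. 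The expected outcome is a polynomial that, after clearing the factor corresponding to the (degenerate) points where $t$ is a cube root of unity, reads
\[
2\Re(x-\bar y)^3 + \Re\big((x-\bar y)^2(\bar x^2 - y^2)\big) = 0.
\]
A useful sanity check along the way: this equation is manifestly invariant under $(x,y)\mapsto(\bar y,\bar x)$ composed with conjugation in the right way, matching the $x\leftrightarrow y$ symmetry of $f$ and the symmetric role of horizontal and vertical lines in Theorem~\ref{T:pedal}; it should also restrict correctly on $\EP$ (where $y = \bar x$, so $x - \bar y = 0$ and the equation degenerates, consistent with $\mathcal D_\EP$ sitting inside $J$).

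The main obstacle is the elimination itself: carrying it out so that the answer comes out in the compact ``$2\Re(\cdot)^3 + \Re(\cdot)$'' form rather than as an unenlightening degree-four mess in $x,\bar x, y,\bar y$. I anticipate that the right change of variables is to work throughout with the single complex quantity $u := x - \bar y$ (equivalently $2(x-\alpha)$ up to the line's defining data) and its conjugate $\bar u = \bar x - y$, together with one more combination such as $x\bar x$ or $\bar x^2 - y^2$; the tangent-line equation \eqref{E:taneqn} restricted to the circle should then factor through these, and $|t|=1$ becomes symmetric in $u,\bar u$, forcing the $\Re$'s to appear. Once the polynomial identity is in hand, the remaining point — promoting the line-by-line statement to a global equation for $J\subset\C^2$ — is immediate, since $\alpha$ was arbitrary and every point of $\C^2$ lies on a unique horizontal line $L_\alpha$, so the equation derived for each slice is simply the equation of $J$ with $\alpha,\bar\alpha$ renamed $y,\bar y$.
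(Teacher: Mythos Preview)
Your plan is on the right track and would eventually succeed, but it is more laborious than the paper's argument, and the extra labor stems from not noticing one simplification already sitting in the proof of Theorem~\ref{T:pedal}.

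The paper does not impose $|t|=1$ as a separate constraint to be turned into $w\bar w=1$. Instead it takes the two \emph{real} lines from that proof---the tangent line $t^3 - t^2 x + t\bar x - 1 = 0$ and the orthogonal line $\bar x - \bar\alpha = -t(x-\alpha)$---and simply solves the second for $t$, obtaining $t = -\dfrac{\bar x - \bar\alpha}{x-\alpha}$. This ratio of a complex number and its conjugate is automatically unimodular, so the circle condition comes for free; one then substitutes this value of $t$ into the cubic. That is the whole elimination, and it immediately produces the quotient $(\bar x-\bar\alpha)/(x-\alpha)$ that, after the projection step, becomes $(\bar x - y)/(x-\bar y)$ and forces the compact ``$\Re$'' form you were hoping to recognize. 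Your route---carry along the full parametrization $x=\tfrac12(\alpha+t+\bar\alpha t^{-1}-t^{-2})$, take its conjugate, and eliminate $t$ subject to $t\bar t=1$---is equivalent in principle but hides this structure and is exactly what makes your ``main obstacle'' an obstacle.

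The second difference is in how one passes from $\EP$ to $\C^2$. You propose to rename $\alpha,\bar\alpha$ directly as $\bar y,y$ on the line $L_\alpha$; the paper instead applies $\mathrm{pr}_{\EP}$, replacing $x\mapsto(x+\bar y)/2$, $\bar x\mapsto(\bar x+y)/2$, and setting $\alpha=\bar y$. These agree (since $x-\alpha$ becomes $(x-\bar y)/2$ either way), but the projection viewpoint makes the substitution mechanical. One small correction to your sanity check: on $\EP$ the equation degenerates because \emph{all} of $\EP$ lies in $J$, not merely $\mathcal{D}_{\EP}$.
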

 
\begin{proof}
Start in $\EP$ with the real lines 
\[
t^3 - t^2 x + t \bar{x} - 1 = 0
\qquad\text{and}\qquad
\bar{x} - \bar\alpha = -t(x - \alpha)\text,
\]
then eliminate $t$ to get
\[
\left(\frac{\bar{x} - \bar\alpha}{x - \alpha}\right)^3 
+ \left(\frac{\bar{x} - \bar\alpha}{x - \alpha}\right)^2 x 
+ \left(\frac{\bar{x} - \bar\alpha}{x - \alpha}\right) \bar{x} 
+ 1 = 0
\]
Now a point $(x,y) \in \C^2$ is in $J$ if 
$\mathrm{pr}_{\EP}(x,y)$ satisfies this equation 
(meaning we replace $x$ with $(x+\bar{y})/2$ 
and $\bar{x}$ with $(\bar{x}+y)/2$) 
when $\alpha = \bar{y}$, which yields
\[
2(\bar{x} - y)^3 
+ (\bar{x} - y)^2 
  (x^2 - \bar{y}^2) 
+ (x - \bar{y})^2
  (\bar{x}^2 - y^2)
+ 2(x - \bar{y})^3 = 0\text.
\]
This is equivalent to the desired equation.
\end{proof}

Note that in particular the equation in 
Corollary~\ref{C:julia} is satisfied when 
$y = \bar{x}$, so $\EP$ is entirely contained in $J$. 
This is to be expected, because every point of $\EP$ 
lies on a line that intersects $L_\infty$ on the 
circle at infinity.

To end this section, we provide a description of 
the Fatou set $\Omega$.

\begin{corollary}\label{T:fatou}
$\Omega$ has two components, each of which is 
biholomorphic to $(\UD \times \UD)/\sigma$, where 
$\UD$ is the open unit disk in $\C$ and $\sigma$ 
is the involution $\sigma(u,v) = (v,u)$. These two 
components are exchanged by $f$.
\end{corollary}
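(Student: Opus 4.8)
The plan is to use the Green function $G$ and the formula from Proposition~\ref{P:green} to coordinatize the Fatou set explicitly. Recall that $\Omega$ is the set where $dd^c G = 0$; equivalently, since $G(x,y) = \log\tau$ with $\tau = \max\{|t_j|, |t_j|\inv\}$ over the three roots $t_1,t_2,t_3$ of \eqref{E:taneqn} (satisfying $t_1 t_2 t_3 = 1$), a point of $\C^2$ lies in the Fatou set precisely when none of the $t_j$ sits on $S^1$, i.e. when $G$ is pluriharmonic near that point. First I would argue that on $\C^2$ the condition ``no $t_j$ on $S^1$'' splits into exactly two cases: either all three roots lie inside the unit disk, or all three lie outside. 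Indeed, because $t_1 t_2 t_3 = 1$, the product of the moduli is $1$, so if one root has modulus $<1$ and another has modulus $>1$ that is compatible; but the key point is that the three roots cannot straddle $S^1$ without the discriminant locus (which is exactly $J$, by Proposition~\ref{P:julia}) being crossed — the two components of $\C^2 \setminus J$ are distinguished by whether $\max|t_j| < 1$ together with $\max|t_j|\inv$, i.e. by the sign of $\log|t_1 t_2 t_3|$ weighted appropriately. Cleanly: on one component all $|t_j| < 1$ is impossible since their product is $1$; rather the invariant that is locally constant is the number of roots inside $S^1$, and since $t\mapsto 1/\bar t$-type symmetry is not available, I instead use that $f$ sends $t_j \mapsto t_j^{-2}$, so it inverts the inside/outside count $k \mapsto 3-k$; continuity of roots plus connectedness of each Fatou component forces $k$ constant on components, and $f$ must exchange the components with $k$ and $3-k$. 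The possible values $k \in \{0,3\}$ correspond to the two components $\Omega_0$ (all roots outside) and $\Omega_3$ (all roots inside), consistent with the two-component picture on $L_\infty$ (the $0$ and $\infty$ Fatou discs there).

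Next, on the component $\Omega_3$ where all three roots $t_1,t_2,t_3$ lie in the open unit disk $\UD$ and satisfy $t_1 t_2 t_3 = 1$: this product constraint is impossible for three points of $\UD$, so in fact the correct description must involve only two free parameters. Here is the resolution I expect to carry out: parametrize not by the $t_j$ but by a reduced set. Writing the three tangent lines through $\mx x$ as $\check\gamma(t_1), \check\gamma(t_2), \check\gamma(t_3)$, the constraint $t_1 t_2 t_3 = 1$ means we may take $(u,v) = (t_1, t_2)$ as coordinates, with $t_3 = 1/(uv)$ determined. The component $\Omega$ consists of those $(x,y)$ for which all of $|t_1|, |t_2|, |t_3|$ are $\ne 1$; one component is where $|t_1|<1$, $|t_2|<1$, and then $|t_3| = 1/|t_1 t_2| > 1$ automatically — so the component is exactly $\{(u,v) \in \UD \times \UD\}$, with $t_3$ forced outside. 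The map $(u,v) \mapsto (x,y)$ via \eqref{E:xyparam} is symmetric in $t_1, t_2$ (swapping them fixes $x$ and $y$), so it factors through $(\UD\times\UD)/\sigma$; I would check it is a biholomorphism onto its image by showing it is proper and generically $2$-to-$1$ with the branch locus accounting for the quotient, or more cleanly by exhibiting the inverse: given $(x,y)$ in the component, the polynomial \eqref{E:taneqn} has a unique factorization into the two ``inside'' roots and one ``outside'' root, and $\{t_1, t_2\}$ is recovered as the unordered pair of roots inside $\UD$ — holomorphic in $(x,y)$ by the residue/contour-integral formulas for elementary symmetric functions of roots in a disk. The other component is handled identically with $\UD$ replaced by its exterior, and the coordinate change $t \mapsto 1/t$ (which is $\check\gamma \mapsto$ the exterior branch) gives the biholomorphism between the two in the form matching $f$.

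Finally, to see $f$ exchanges the two components: $f$ acts on tangent lines by $\check f(\check\gamma(t)) = \check\gamma(1/t^2)$, hence on the root-triple by $(t_1, t_2, t_3) \mapsto (t_1^{-2}, t_2^{-2}, t_3^{-2})$. If $|t_1|, |t_2| < 1$ then $|t_1^{-2}|, |t_2^{-2}| > 1$, so a point of the ``two-roots-inside'' component maps to the ``two-roots-outside'' component, and vice versa; thus $f$ swaps $\Omega_3$ and $\Omega_0$, and $f^2$ preserves each. In the $(u,v)$-coordinates on $\UD\times\UD$, after composing with the identification of the second component with $\UD\times\UD$ via $t\mapsto 1/t$, the map $f$ becomes $(u,v) \mapsto (u^2, v^2)$ (descended to the $\sigma$-quotient), which is a well-defined holomorphic self-map — a clean bonus but not needed for the statement. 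The main obstacle I anticipate is the first step: rigorously justifying that $\C^2 \setminus J$ has exactly the two components described and that the inside/outside root count is the locally constant invariant — this requires knowing $J$ is precisely the set where some $t_j \in S^1$ (which is Proposition~\ref{P:julia}) together with a connectedness argument, and care about the behavior at and near $L_\infty$ to confirm the count matches the two Fatou discs of $\zeta \mapsto 1/\zeta^2$ there. Once the global topology is pinned down, the explicit biholomorphism with $(\UD\times\UD)/\sigma$ is a matter of writing down the two-sided inverse via symmetric functions of the roots lying in a disk.
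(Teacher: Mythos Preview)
Your approach is essentially the paper's: parametrize a Fatou component by the unordered pair $\{u,v\}$ of roots of \eqref{E:taneqn} lying on one side of $S^1$, with the third root $1/(uv)$ forced to the other side. However, your first paragraph is muddled: you write ``the possible values $k \in \{0,3\}$'' and name the components $\Omega_0$ (all roots outside) and $\Omega_3$ (all roots inside), then immediately observe that $k=3$ is impossible because $t_1t_2t_3=1$. The actual invariant is $k \in \{1,2\}$, which you do use correctly in the second paragraph; this should be stated cleanly from the start rather than introduced via a self-correction.

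The paper bypasses both the connectedness argument and the $L_\infty$ issue you flag as obstacles by simply writing the parametrization in homogeneous coordinates,
\[
\Psi_x(u,v) = \big[\,u^2v + uv^2 + 1 \;:\; u + v + u^2v^2 \;:\; uv\,\big],
\]
and its companion $\Psi_y(u,v) = \Psi_x(1/u,1/v)$. These are defined on all of $\C^2$ (not just $uv\ne 0$), send $(0,0)$ to $[1:0:0]$ and $[0:1:0]$ respectively, and together cover $\CP^2$; connectedness of each component is then immediate from connectedness of $\UD\times\UD$. The exchange by $f$ and the model $(u,v)\mapsto(u^2,v^2)$ come from the one-line verification $f\circ\Psi_x(u,v)=\Psi_y(u^2,v^2)$. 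Your contour-integral inverse is fine, but writing down $\Psi_x$ explicitly makes the whole argument a direct computation and removes the need for the separate topological steps you anticipate.
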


\begin{proof}[Proof of Corollary~\ref{T:fatou}]
Define the following two functions from $\C^2$ to $\CP^2$:
\begin{align*}
\Psi_x(u,v) 
&= \big[ u^2 v + uv^2 + 1 : u + v + u^2 v^2 : uv \big]\text, \\
\Psi_y(u,v) 
&= \big[ u + v + u^2 v^2 : u^2 v + uv^2 + 1 : uv \big]\text.
\end{align*}
Direct computation shows that  
\[
(f \circ \Psi_x)(u,v) = \Psi_y(u^2,v^2) 
\qquad\text{and}\qquad
(f \circ \Psi_y)(u,v) = \Psi_x(u^2,v^2)\text,
\]
and for $uv \ne 0$, $\Psi_x(1/u,1/v) = \Psi_y(u,v)$. 
Geometrically, $u$ and $v$ are the $t$-parameters for 
two of the lines in $\mathcal{D}^\vee$ passing through 
$\Psi_x(u,v)$, the third being $1/uv$. Thus, $\Psi_x(u,v)$ 
is contained in $J$ if and only if either $u$ or $v$ lies 
on the unit circle, and the same holds for $\Psi_y(u,v)$. 
Together, $\Psi_x$ and $\Psi_y$ cover all of $\CP^2$.

By definition of $J$ as the complement of $\Omega$, 
we see that $\Omega$ is covered by the two images of 
$\UD \times \UD$ via $\Psi_x$ and $\Psi_y$. Thus $\Omega$ 
has two connected components. The polynomials defining 
$\Psi_x$ and $\Psi_y$ are symmetric in $u$ and $v$, and 
distinct unordered pairs $\{u,v\}$ lead to different 
points of $\CP^2$ by $\Psi_x$ and $\Psi_y$. 
This proves the result.
\end{proof}

The functions $\Psi_x$ and $\Psi_y$ are variants of 
the function $\Psi$ used in \cite{sN08} as an 
``inverse B\"ottcher coordinate'' on the Julia set of 
$f$. We can see from the formulas for $f \circ \Psi_x$ 
and $f \circ \Psi_y$ how the orbit of any point of 
$\Omega$ tends uniformly and super-exponentially to 
the orbit consisting of 
$\Psi_x(0,0) = [1:0:0]$ and $\Psi_y(0,0) = [0:1:0]$.

\section{Iterated monodromy group of the deltoid map.}

We begin this final section with one more exceptional 
property of $f$.

The Jacobian determinant of $f$ at $(x,y) \in \C^2$ is 
$4(1 - xy)$. Thus the locus of critical points in $\C^2$ 
is the curve $\mathcal{C}$ having equation $xy = 1$, whose 
importance was previously noted in Section \ref{S:curve}. Indeed, 
because the lines $\check\gamma(t)$ and $\check\gamma(-t)$ 
have the same image under $\check{f}$, their point of 
intersection must be a critical point of $f$; by property 
(C), all such points lie on $\mathcal{C}$.

If we parametrize $\mathcal{C}$ by $(t,1/t)$, then we find 
that the image of a point of $\mathcal{C}$ can be written as 
\[
f\!\left(t,\frac1t\right) 
= \left(-2t + \frac{1}{t^2}, t^2 - \frac{2}{t}\right) 
= \gamma(-t),
\]
and so we see that $f(\mathcal{C}) = \mathcal{D}$. Because 
$\mathcal{D}$ is forward invariant under $f$, we conclude 
that $f$ is \emph{post-critically finite}, meaning that the 
post-critical locus $\bigcup_{n\ge1} f^n(\mathcal{C})$ is an 
algebraic curve---in this case, $\mathcal{D}$ itself. 
(Post-critically finite maps of $\CP^2$ were introduced in 
\cite{jeFnS92}, under the name of ``critically finite rational 
maps.'')

Set $\mathcal{X} = \C^2 \setminus \mathcal{D}$ and 
$\mathcal{X}_1 = \mathcal{X} \setminus \mathcal{C}$. 
Then the above property implies that 
$f \vert_{\mathcal{X}_1}$ is a covering map 
from $\mathcal{X}_1$ to $\mathcal{X}$, called a 
\emph{partial self-covering} of $\mathcal{X}$. 
Let $\mx{x}_0 = (0,0) \in \mathcal{X}$; then the 
fundamental group $\pi_1(\mathcal{X},\mx{x}_0)$ permutes 
the set of preimages of $\mx{x}_0$ by $f$ in a standard 
way: given $[\eta] \in \pi_1(\mathcal{X},\mx{x}_0)$ 
and $\mx{y} \in f^{-1}(\mx{x}_0)$, use $f$ to lift 
$\eta$ to a path $\bar\eta$ starting at $\mx{y}$, and 
let $[\eta] \cdot \mx{y}$ be the endpoint of $\bar\eta$. 
This defines a homomorphism $\mu_f$ from 
$\pi_1(\mathcal{X},\mx{x}_0)$ to the symmetric group 
on $f^{-1}(\mx{x}_0)$, called the \emph{monodromy 
homomorphism}.

Likewise, if we set $\mathcal{X}_n = f^{-n}(\mathcal{X})$, 
then $f^n \vert_{\mathcal{X}_n}$ is a covering map, and 
$\pi_1(\mathcal{X},\mx{x}_0)$ acts on $f^{-n}(\mx{x}_0)$ 
by the monodromy homomorphism $\mu_{f^n}$. The intersection 
\[
\kappa_f = \bigcap_{n\ge1} \mathop{\mathrm{ker}}\mu_{f^n}
\]
is a normal subgroup of $\pi_1(\mathcal{X},\mx{x}_0)$, 
consisting of all elements $[\eta]$ such that every lift 
of $\eta$ by every iterate of $f$ remains a loop. The 
quotient 
\[
\mathrm{IMG}(f) = \pi_1(\mathcal{X},\mx{x}_0)/\kappa_f
\]
is called the {\em iterated monodromy group} of $f$. 
(See \cite{sG12,vN11} for details.)

Iterated monodromy groups are a relatively recent 
addition to the complex dynamics toolbox. They have 
already proved useful in classification problems 
\cite{lBvN06} and in determining the shape of Julia sets 
more complicated than that of the deltoid map \cite{vN12}. 
Nevertheless, only a few such groups have been 
explicitly calculated, especially for maps in dimension 
greater than $1$. A nice feature of $f$ is that 
$\mathrm{IMG}(f)$ can be found directly from the 
definition, which is how we will prove our second 
main result.

\begin{theorem}\label{T:img}
$\mathrm{IMG}(f)$ is isomorphic to the affine 
Coxeter group $\tilde{A}_2$.
\end{theorem}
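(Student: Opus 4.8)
The plan is to compute $\mathrm{IMG}(f)$ directly from the recursive structure of the partial self-covering $f\colon \mathcal X_1 \to \mathcal X$, using the standard machinery of self-similar groups. First I would identify the fundamental group $\pi_1(\mathcal X, \mx x_0)$, where $\mathcal X = \C^2 \setminus \mathcal D$. The affine equation \eqref{E:affeqn} shows $\mathcal D$ is a quartic curve with three cusps, and its dual is the folium \eqref{E:dualeqn} with a single node at infinity; from this one knows $\mathcal D$ has three cusps and no other singularities, so $\pi_1(\C^2 \setminus \mathcal D)$ can be presented via the Zariski--van Kampen method. I expect the outcome to be a group generated by three meridians $a_1, a_2, a_3$ (one around each local branch near a cusp, or more precisely generators coming from a generic line section, which meets $\mathcal D$ in four points) subject to relations forced by the cusps; the key point is that this group surjects onto $\Z^2$ (abelianization) and, after passing to $\mathrm{IMG}$, the extra relations collapse it to exactly $\tilde A_2$.

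Next I would set up the biset/wreath-recursion description. Since $\deg f = 4$ (the map $[x:y:z]\mapsto[y^2-2xz:x^2-2yz:z^2]$ has degree $4$), the fiber $f^{-1}(\mx x_0)$ has four points, and $\pi_1(\mathcal X,\mx x_0)$ acts on the rooted $4$-ary tree $\bigsqcup_n f^{-n}(\mx x_0)$. To get the wreath recursion I would choose a convenient basepoint and a path connecting it to each of the four preimages, then compute how each generating loop lifts under $f$: each lift is a path in $\mathcal X_1$ whose endpoints give a permutation of the fiber, and whose ``correction'' loops (after closing up with the chosen connecting paths) give the recursive data. Here the geometric description of $f$ via tangent lines is the essential tool: the preimages of $\mx x_0$ correspond to the ways of reversing the operation $t\mapsto 1/t^2$ on the three tangent parameters $t_1,t_2,t_3$ (subject to $t_1t_2t_3=1$), so a loop in $\mathcal X$ that winds around $\mathcal D$ once — i.e.\ makes two of the $t_i$ collide — lifts to something that winds around $\mathcal C$ (since $f(\mathcal C)=\mathcal D$), and the monodromy is controlled by the square-root branching $t\mapsto t^{1/2}$ at each coordinate. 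I would extract from this a recursion of the schematic form $a_i = \langle\,\cdots\,\rangle\pi_i$ where each $\pi_i$ is a transposition in $S_4$ and the states are again words in the $a_j$.

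Finally, I would recognize the resulting self-similar group as $\tilde A_2$. The cleanest route is to exhibit an explicit self-similar action of $\tilde A_2 = \langle r_0, r_1, r_2 \mid r_i^2, (r_ir_j)^3 \rangle$ on the $4$-ary tree — the affine Coxeter group $\tilde A_2$ is the semidirect product $\Z^2 \rtimes S_3$ acting on the plane tiled by triangles, and the natural ``zoom by $2$'' self-covering of the quotient orbifold (the $(3,3,3)$-triangle orbifold, whose orbifold fundamental group is $\tilde A_2$) has degree $4$ and induces exactly such a recursion — and then match it, generator for generator, with the recursion computed for $f$. Concretely, $\mathcal D$ together with the line at infinity realizes the branch locus of the orbifold $\C^2/\tilde A_2 \cong \CP^2$, and $f$ is the map induced by multiplication by $2$ (or by $1+\omega$, squared) on $\C^2 = \mathbb{C}\otimes_{\Z} (A_2\text{-lattice})$; this is the content that makes the identification natural rather than a coincidence. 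I would either quote the faithfulness of the standard self-similar $\tilde A_2$-action from \cite{vN11} (or \cite{mHwW88}, where these Coxeter groups appear explicitly for the Hoffman--Withers maps) and check that the two recursions agree, or — if a self-contained argument is wanted — verify directly that $\mathrm{IMG}(f)$ satisfies the Coxeter relations of $\tilde A_2$ and has the right growth/structure to rule out a proper quotient.

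\textit{Anticipated main obstacle.} The hard part will be the middle step: carrying out the lift computation carefully enough to pin down the wreath recursion, including the bookkeeping of connecting paths and the precise transpositions in $S_4$. The degree-$4$ covering is branched over $\mathcal D$ in a way governed by the interaction of the three tangent parameters, and an honest computation requires tracking a loop in $\mathcal X$ around each cusp and around the smooth part of $\mathcal D$, seeing where its four lifts go, and resolving the sign ambiguities coming from the square roots $t_i \mapsto \pm t_i^{1/2}$. A clean way to manage this is to work on $L_\infty$ first, where $f$ restricts to $\zeta \mapsto 1/\zeta^2$ and the monodromy is the transparent self-similar action of the infinite dihedral group on the binary tree, and then use the fibration of $\C^2$ by the lines $L_\alpha$ from Theorem~\ref{T:pedal} to propagate this into the second dimension — the extra $S_3$ factor of $\tilde A_2 = \Z^2 \rtimes S_3$ should emerge from the permutation action on $\{t_1,t_2,t_3\}$, and the $\Z^2$ from the combined winding around the two points $\gamma(0),\gamma(\infty)$ where $\mathcal D$ meets $L_\infty$.
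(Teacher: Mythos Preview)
Your outline is a viable route, but it is not the one the paper takes, and the paper's argument is considerably shorter because it exploits a geometric shortcut you do not mention. The paper never writes down a full wreath recursion on the $4$-ary tree. Instead it uses two ingredients. First, the generating loops $\eta_1,\eta_2,\eta_3$ are chosen to lie \emph{inside tangent lines} $\check\gamma(\omega^k)$; since $f$ carries tangent lines to tangent lines, every lift of every $\eta_k$ by every $f^n$ again lies in some $\check\gamma(t)$. Parametrizing $\check\gamma(t)$ by $\sigma_t(s)=(t+s/\sqrt{t},\,1/t+s\sqrt{t})$ one computes $f(\sigma_t(s))=\sigma_{1/t^2}(s^2-2)$, so on each tangent line $f$ acts as the one-variable Chebyshev map $T(s)=s^2-2$. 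This reduces the monodromy computation to the classical analysis of lifts of loops under $T$, from which $[\eta_k]^2\in\kappa_f$ follows immediately (Lemma~\ref{L:order2}), and hence $\mathrm{IMG}(f)$ is a quotient of $\tilde A_2$. Second, to rule out a proper quotient the paper does not match recursions or invoke faithfulness of a model action; it uses the group structure $\tilde A_2\cong\Lambda\rtimes D_3$ and observes that a nontrivial kernel would force either $\phi(\rho_1)=\phi(\rho_2)$ (contradicted by $\mu_f([\eta_1])\ne\mu_f([\eta_2])$) or finiteness of $\mathrm{IMG}(f)$ (contradicted by restricting to a single invariant tangent line, where $\mathrm{IMG}(T)$ is already infinite).

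Your closing suggestion---to work on $L_\infty$ and then propagate via the fibration by the horizontal lines $L_\alpha$ from Theorem~\ref{T:pedal}---is the one place where your plan would actually stall: $f$ does \emph{not} preserve the family $\{L_\alpha\}$ (it swaps the roles of $x$ and $y$ and is quadratic in each), so lifts of loops escape that fibration and the bookkeeping you anticipate becomes intractable. The family that \emph{is} $f$-invariant is $\mathcal D^\vee$, the tangent lines, and that is exactly the fibration the paper uses. If you redirect your ``propagate from $L_\infty$'' idea to the tangent-line family, you recover the paper's Chebyshev reduction and the computation collapses. Your wreath-recursion/orbifold-model approach would also succeed, and has the advantage of being the template that generalizes to other Chebyshev-like maps and higher rank; the paper's approach buys a short, self-contained argument tailored to this particular $f$.
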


$\tilde{A}_2$ can be realized geometrically as the group 
generated by reflections across the sides of an equilateral 
triangle in the plane. It has the group presentation 
\[
\tilde{A}_2 = 
\left\langle g_1,g_2,g_3 \mid 
\forall k\ g_k^2 = 1,\; 
\forall j \forall k\ (g_jg_k)^3 = 1 \right\rangle\text.
\]
On the other hand, the fundamental group 
$\pi_1(\mathcal{X},\mx{x}_0)$ is isomorphic to 
the related Artin group 
\[
\bar{A}_2 = \langle h_1,h_2,h_3 \mid 
\forall j \forall k\ h_jh_kh_j = h_kh_jh_k \rangle
\]
(see \cite{eaBjicA08} for a proof). Note that in 
$\tilde{A}_2$, the relation $(g_jg_k)^3 = 1$ is 
equivalent to $g_j g_k g_j = g_k g_j g_k$, and so 
$\tilde{A}_2$ can be obtained from $\bar{A}_2$ by 
adding the relations $h_k^2 = 1$ for $k = 1,2,3$. 
We will accomplish this in Lemma~\ref{L:order2}, 
then show that no additional relations are present 
in $\mathrm{IMG}(f)$.

First we find a useful set of generators for 
$\pi_1(\mathcal{X},\mx{x}_0)$: these can be chosen as 
circles contained in the lines $\check\gamma(\omega)$, 
$\check\gamma(\omega^2)$, and $\check\gamma(1)$ and 
passing through $\mx{x}_0$. To see why, we use the 
Zariski--van~Kampen theorem \cite{eV33,oZ29}, which 
states that generators can be obtained by taking a 
sufficiently general line $L$ and drawing loops around 
the finite set of points $L \cap \mathcal{D}$. 
The condition on $L$ is that $L \cap \mathcal{D}$ 
should have four distinct points in $\C^2$. 
We choose a line of the form 
$L = \{ (x,y) \mid x + y = -a \}$, where $2 < a < 3$. 
Then \eqref{E:affeqn} implies that 
$\gamma(t)$ lies on $L$ if 
\[
t^4 + 2t^3 + at^2 + 2t + 1 = 0,
\]
and our choice of $a$ ensures that all solutions 
of this equation lie on the unit circle, which means 
all points of intersection in $L \cap \mathcal{D}$ lie 
in $\EP$. (See Figure~\ref{F:pi1}, left.) Thus the 
four points of $L \cap \mathcal{D}$ lie in a straight 
(real) line, and so we can draw small loops around 
these inside the (complex) line $L$. Each such loop 
intersects $\EP$ in two points: one in $K$, and one outside.

\begin{figure}[h]\centering
\includegraphics{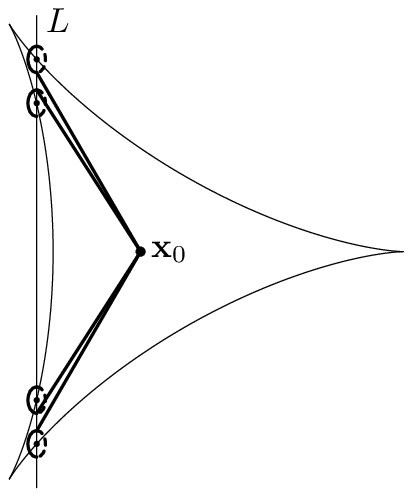}\hspace{.65in}
\includegraphics{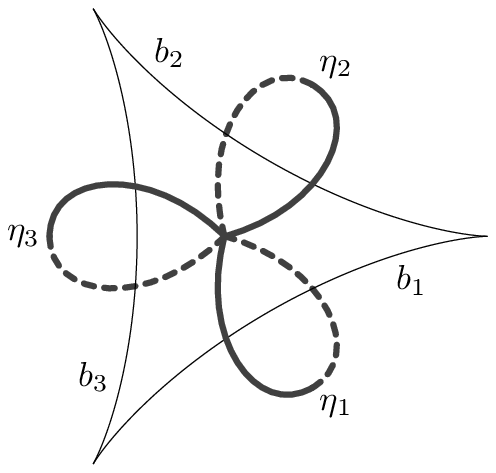}
\caption{{\sc Left:} The complex line $L$ with equation 
$x+y=a$ intersects the deltoid $\mathcal{D}$ at four 
points, all contained in $\EP$, provided $-3 < a < -2$. 
Around each point of intersection, draw a loop inside 
$L$ that intersects $K$ at one point. When connected 
to $\mx{x}_0$ by additional segments in $K$, these loops 
generate $\pi_1(\mathcal{X},\mx{x}_0)$. 
{\sc Right:} Generators for $\pi_1(\mathcal{X},\mx{x}_0)$, 
homotopic to those found in left picture. 
Each loop $\eta_k$ is contained in the complex line 
$\check\gamma(\omega^k)$, which intersects $\mathcal{D}$ 
at the cusp $\gamma(\omega^k)$ and at the midpoint of 
the opposite branch $b_k$.}
\label{F:pi1}
\end{figure}

Connect each loop in $L$ from the point where it 
intersects $K$ to $\mx{x}_0$ with a line segment, 
so that it becomes an element of $\pi_1(\mathcal{X},\mx{x}_0)$ 
(with orientation given by the complex line in which it lies). 
Let's label these elements. The real deltoid has 
three cusps, and between these lie three ``branches'': 
\begin{itemize}
\item one from $\gamma(1)$ to $\gamma(\omega)$, 
\item one from $\gamma(\omega)$ to $\gamma(\omega^2)$, and 
\item one from $\gamma(\omega^2)$ to $\gamma(1)$.
\end{itemize}
Call these branches, respectively, $b_2$, $b_3$, and 
$b_1$, so that $b_k$ and $b_{k+1}$ meet at the cusp to 
which $\check\gamma(\omega^{k+2})$ lies tangent. (All 
indices are computed modulo $3$.) The loops in $L$ 
surrounding $b_1$ and $b_2$ are homotopic in $\mathcal{X}$ 
to loops that lie in $\check\gamma(\omega)$ and 
$\check\gamma(\omega^2)$. On the other hand, the two loops 
surrounding $b_3$ are both homotopic to the {\em same} loop 
in $\check\gamma(1)$. Thus $\pi_1(\mathcal{X},\mx{x}_0)$ 
is generated by three elements, which have representatives 
lying in the lines $\check\gamma(\omega)$, 
$\check\gamma(\omega^2)$, and $\check\gamma(1)$. 
Call these, respectively, $\eta_1$, $\eta_2$, and 
$\eta_3$, so that $\eta_k$ wraps around 
$b_k \cap \check\gamma(\omega^k)$. 
(See Figure~\ref{F:pi1}, right.)

\begin{figure}[h]\centering
\includegraphics{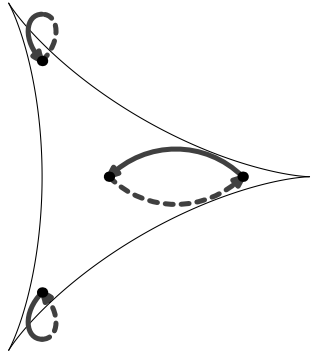}
\caption{The four lifts of $\eta_3$ by $f$. 
The loops lie in $\check\gamma(-1)$, and the arcs 
lie in $\check\gamma(1)$.}\label{F:arclifts}
\label{F:lifts}
\end{figure}

\begin{lemma}\label{L:order2}
For each $k = 1,2,3$ and for all $n \ge 1$, 
$\mu_{f^n}([\eta_k])$ has order $2$.
\end{lemma}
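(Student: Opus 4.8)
The plan is to analyze the lift of each generator $\eta_k$ directly from the geometry of $f$, using the fact that $\eta_k$ is a small loop lying in the line $\check\gamma(\omega^k)$ and wrapping once around the single point $b_k \cap \check\gamma(\omega^k)$. The key observation is that $\check\gamma(\omega^k)$ is tangent to $\mathcal{D}$ at the cusp $\gamma(\omega^k)$, and that $\check f(\check\gamma(t)) = \check\gamma(1/t^2)$ sends $\check\gamma(-1)\mapsto\check\gamma(1)$, while $\check\gamma(\pm1)$ are both fixed by $\check f$ (since $(\pm1)$ are cube roots of unity up to sign, $1/(\pm1)^2=1$). So the preimage structure of the line $\check\gamma(\omega^k)$ under $f$ is governed by solving $1/t^2 = \omega^k$, which has two solutions; this is where the ``order $2$'' comes from. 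Concretely, for $k=3$ (illustrated in Figure~\ref{F:arclifts}), $f^{-1}(\check\gamma(1))$ meets $\mathcal{X}_1$ in the line $\check\gamma(-1)$ together with the line $\check\gamma(1)$ itself, and the four preimages of $\mx x_0$ are distributed so that $\eta_3$ has two lifts that are loops (in $\check\gamma(-1)$) and two lifts that are arcs (in $\check\gamma(1)$) swapping a pair of preimages; thus $\mu_f([\eta_3])$ is a product of a transposition and fixed points, hence an involution. The cases $k=1,2$ are analogous after rotating by $\omega$.

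The steps I would carry out, in order: (1) Compute $f^{-1}(\mx x_0)$ explicitly. Since $\mx x_0 = (0,0)$, solving $y^2 - 2x = 0$, $x^2 - 2y = 0$ gives four points (the preimages), one of which is $(0,0)$ itself; record their $t$-parameters via \eqref{E:xyparam}. (2) Identify the covering behavior of $f$ restricted to each line $\check\gamma(\omega^k)$: show that $f$ maps $\check\gamma(t)$ into $\check\gamma(1/t^2)$, that the map $\check\gamma(-\sqrt{\omega^{-k}}\,)$ (either square root) $\to \check\gamma(\omega^k)$ is, on the level of the $t$-parameter inside the line, a local homeomorphism near the relevant points, and that the point $b_k\cap\check\gamma(\omega^k)$ has exactly two preimages under $f$ lying in $\mathcal{X}_1$ over that line's worth of geometry. (3) Use the product formula $\mu_{f^{n+1}} = (\text{action by the two-sheeted extension}) \circ \mu_{f^n}$ inductively: because the ``new'' branching introduced at each stage over the relevant point is always a simple two-sheeted cover (the map $t \mapsto 1/t^2$ has the generator wrapping around points where exactly two sheets come together), each iterate contributes exactly one transposition's worth to the cycle type of $\mu_{f^n}([\eta_k])$, and one checks these transpositions compose to an element of order $2$ — in fact, the wreath-recursion description of $\mathrm{IMG}(f)$ will show $\eta_k$ acts as a specific involution at every level. (4) Conclude that $\mu_{f^n}([\eta_k])$ has order exactly $2$ (not $1$) because the generator genuinely links the post-critical set at every level, so the action is never trivial.

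The main obstacle I expect is step (3): getting the induction on $n$ right requires a careful bookkeeping of \emph{which} preimages get swapped at each level, i.e., writing down the self-similar (wreath-recursion) action of $\eta_k$ on the rooted tree of preimages $\bigsqcup_n f^{-n}(\mx x_0)$ and verifying that the recursion $\eta_k = \langle\!\langle \ast,\dots\rangle\!\rangle\pi$ has $\pi$ a transposition and the states again of order dividing $2$. The geometry makes the $n=1$ case transparent (Figure~\ref{F:arclifts}), but propagating it cleanly — rather than just asserting it — is the real content. A clean way to do this is to use the conjugation $\Psi_x,\Psi_y$ from the proof of Corollary~\ref{T:fatou}, under which $f$ becomes $(u,v)\mapsto(u^2,v^2)$ (up to swapping the two charts), so that a loop around a point where, say, $u$ crosses a branch point lifts by the standard degree-two covering $u\mapsto u^2$; the generator $\eta_k$ corresponds to a loop in one of the $u$- or $v$-factors around a single point, and the order-$2$ claim reduces to the elementary fact that such a loop acts on the binary rooted tree (coming from iterating $u\mapsto u^2$) as an involution at every level. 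That reduction is the step I would spend the most care on; the remaining computations (step (1), and the explicit identification of the branch points) are routine.
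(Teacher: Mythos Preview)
Your overall strategy---use that lifts of $\eta_k$ stay on tangent lines $\check\gamma(t)$ and that $f$ sends $\check\gamma(t)$ to $\check\gamma(1/t^2)$---is the right starting point, and matches the paper. But the place you flag as the ``main obstacle'' is exactly where the argument, as you have written it, goes wrong. Your proposed shortcut through $\Psi_x,\Psi_y$ reduces $f$ to $(u,v)\mapsto(u^2,v^2)$ and then claims the problem becomes ``a loop in one of the $u$- or $v$-factors around a single point \dots\ acts on the binary rooted tree (coming from iterating $u\mapsto u^2$) as an involution at every level.'' That claim is false: a small loop around the unique critical value of $u\mapsto u^2$ is the adding machine, which acts on level $n$ as a single $2^n$-cycle, not as an involution; and a loop around any non-critical value lifts to two disjoint loops, giving order $1$. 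Moreover, the generators $\eta_k$ wind around branches of $\mathcal{D}$, which in the $(u,v)$ chart are curves such as $u=v$, not points in a single factor, so the reduction to a one-factor binary tree is not available in the first place.

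The missing idea is that along each tangent line the map $f$ is conjugate not to the power map $s\mapsto s^2$ but to the \emph{Chebyshev} map $T(s)=s^2-2$. The paper makes this explicit by parametrizing $\check\gamma(t)$ as $\sigma_t(s)=\bigl(t+s/\sqrt{t},\,1/t+s\sqrt{t}\bigr)$ and checking $f\bigl(\sigma_t(s)\bigr)=\sigma_{1/t^2}(s^2-2)$. This is the crucial difference: for $T$, a loop encircling the critical value $-2$ lifts to a pair of arcs that together form a loop \emph{not} encircling $-2$, so after one step every further lift is again a closed loop. Hence every lift of $\eta_k$ under any iterate is either a loop or one of a matched pair of arcs, giving order dividing $2$; nontriviality is immediate since at least one lift is a genuine pair of arcs. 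Your wreath-recursion bookkeeping can be made to work, but only once you have this Chebyshev reduction; without it step~(3) does not go through.
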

\begin{proof}
We want to show that every lift of $\eta_k$ by 
every iterate of $f$ is either a closed loop, or 
forms a closed loop with one other lift. We will 
use the fact that every lift of $\eta_k$ by any 
iterate of $f$ is contained in some line 
$\check\gamma(t) \in \mathcal{D}^\vee$.

The line $\check\gamma(t)$, when $t \in \C\setminus\{0\}$, 
can be parametrized by 
\[
\sigma_t(s) 
= \left(t + \frac{s}{\sqrt{t}}, \frac{1}{t} + s\sqrt{t}\right)\text, 
\qquad s \in \C\text,
\]
as may be checked directly from the equation for 
$\check\gamma(t)$. (Here, $\sqrt{t}$ can be either 
square root of $t$.) This parametrization of $\gamma(t)$ 
has the nice feature that when $s = 0$, the resulting 
point lies on the critical locus $\mathcal{C}$, since 
it is the midpoint of $\gamma(\sqrt{t})$ and 
$\gamma(-\sqrt{t})$ (see property (B) from 
Section \ref{S:curve}).

Now when we apply $f$ to $\sigma_t(s)$, we obtain 
\[
f\big(\sigma_t(s)\big) 
= \left( 
  \frac{1}{t^2} + (s^2 - 2) t, 
  t^2 + (s^2 - 2)\frac1t
  \right)
= \sigma_{1/t^2}(s^2 - 2)\text.
\]
So we just need to consider the possible lifts of 
a closed curve in $\C$ by the polynomial $T(s) = s^2 - 2$, 
avoiding the post-critical set of $T$. (See, for example, 
Figure~\ref{F:arclifts}, which illustrates the four lifts 
of $\eta_3$ by $f$.)

The critical point of $T(s)$ is $0$, and its critical 
value is $-2$. The image of $-2$ by $T(s)$ is $2$, which 
is a fixed point. Let $\eta$ be any loop in $\C$ 
that does not pass through $-2$ or $2$. If $\eta$ does not 
encircle $-2$, then it lifts to a pair of disjoint loops; 
if $\eta$ encircles $2$, then one of these loops encircles 
$2$ and one encircles $-2$, otherwise neither lift encircles 
$-2$. If $\eta$ does encircle $-2$, then it lifts to a double 
cover of itself, consisting of two arcs, that does not encircle 
$-2$.
\end{proof}

In other words, Lemma~\ref{L:order2} says that 
the square of each generator $[\eta_k]$ is in 
$\kappa_f$. Together with the relations in 
$\pi_1(\mathcal{X},\mx{x}_0) = \bar{A}_2$, 
this result implies that $\mathrm{IMG}(f)$ 
is a quotient of $\tilde{A}_2$.
To complete the proof of Theorem~\ref{T:img}, 
we need to show that no additional relations are 
present in $\mathrm{IMG}(f)$.

\begin{proof}[Proof of Theorem~\ref{T:img}]
Recall the realization of $\tilde{A}_2$ as the 
group generated by reflections $\rho_1$, $\rho_2$, 
$\rho_3$ across the sides of an equilateral triangle. 
This group can be expressed as the semidirect product 
$\Lambda \rtimes D_3$, where $\Lambda$ is the 
normal subgroup consisting of translations 
(isomorphic to $\Z^2$) and $D_3$ is the subgroup 
that fixes a vertex of the triangle (the dihedral 
group of order $6$). $D_3$ is generated by the 
reflections in two adjacent sides of the triangle. 

Suppose $\phi : \tilde{A}_2 \to \IMG(f)$ is the 
homomomorphism that sends $\rho_k$ to $[\eta_k]\kappa_f$. 
If $\ker\phi \cap D_3 \ne \{\id\}$, then the order 
of $\phi(D_3)$ is either $1$ or $2$, because the group 
of rotations is the only nontrivial normal subgroup of 
$D_3$; in either case we must have $\phi(\rho_1) = 
\phi(\rho_2) = \phi(\rho_3)$. On the other hand, if 
$\ker\phi \cap \Lambda \ne \{\id\}$, then 
because this intersection is invariant under the 
action of $D_3$, it must contain two linearly independent 
elements $\lambda_1,\lambda_2$; the group 
$\Lambda/(\lambda_1\Z\oplus\lambda_2\Z)$ is then finite 
and so is $\phi(\Lambda)$.

Therefore, in order to show that $\phi$ is an 
isomorphism, it suffices to show that 
$[\eta_1]\kappa_f \ne [\eta_2]\kappa_f$ and that 
$\IMG(f)$ is infinite. The first condition is easily 
checked by observing that $\mu_f([\eta_1])$ and 
$\mu_f([\eta_2])$ are different permutations of 
$f^{-1}(\mx{x}_0)$. The second condition may be seen 
by restricting our attention to an invariant line 
such as $\gamma(1)$; on this line $f$ behaves like 
the single-variable Chebyshev map $s \mapsto s^2 - 2$, 
and the iterated monodromy group of such a map is known 
to have elements of infinite order (see \cite{vN11}). 
\end{proof}

\end{document}